\newtheorem{theorem}{Theorem}[section]
\newtheorem{lemma}{Lemma}[section]
\newtheorem{proposition}{Proposition}[section]
\newtheorem{definition}{Definition}[section]
\newtheorem{remark}{Remark}[section]
\newcommand{\bal}{\begin{align}}
\newcommand{\bbal}{\begin{align*}}
\newcommand{\beq}{\begin{equation}}
\newcommand{\eeq}{\end{equation}}
\newcommand{\bca}{\begin{cases}}
\newcommand{\eca}{\end{cases}}
\def\div{\mathord{{\rm div}}}
\newcommand{\pa}{\partial}
\newcommand{\fr}{\frac}
\newcommand{\na}{\nabla}
\newcommand{\De}{\Delta}
\newcommand{\cd}{\cdot}
\newcommand{\ep}{\varepsilon}
\newcommand{\dd}{\mathrm{d}}
\newcommand{\B}{\dot{B}}
\newcommand{\LL}{\tilde{L}}
\newcommand{\R}{\mathbb{R}}
\newcommand{\ges}{\gtrsim}
\newcommand{\D}{\mathrm{div}}
\newcommand{\ee}{\vec{e}}
\newcommand{\f}{\left}
\newcommand{\g}{\right}
\begin{document}
\title{Ill-posedness issue on the Oldroyd-B model in the critical Besov spaces}

\author{Jinlu Li$^{1}$, Yanghai Yu$^{2,}$\footnote{E-mail: lijinlu@gnnu.edu.cn; yuyanghai214@sina.com(Corresponding author); mathzwp2010@163.com} and Weipeng Zhu$^{3}$\\
\small $^1$ School of Mathematics and Computer Sciences, Gannan Normal University, Ganzhou 341000, China\\
\small $^2$ School of Mathematics and Statistics, Anhui Normal University, Wuhu 241002, China\\
\small $^3$ School of Mathematics, Foshan University, Foshan, Guangdong 528000, China}

\date{\today}

\maketitle\noindent{\hrulefill}

{\bf Abstract:} It was proved in \cite[J. Funct. Anal., 2020]{AP} that the Cauchy problem for some Oldroyd-B model is well-posed in $\B^{d/p-1}_{p,1}(\R^d) \times \B^{d/p}_{p,1}(\R^d)$ with $1\leq p<2d$. In this paper, we prove that the Cauchy problem for the same Oldroyd-B model is ill-posed in $\B^{d/p-1}_{p,r}(\R^d) \times \B^{d/p}_{p,r}(\R^d)$ with $1\leq p\leq \infty$ and $1< r\leq\infty$ due to the lack of continuous dependence  of the solution.

{\bf Keywords:} Oldroyd-B model; Ill-posedness; Besov spaces.

{\bf MSC (2010):} 35Q35; 35B65; 76D05; 76N10.
\vskip0mm\noindent{\hrulefill}

\section{Introduction}
In this paper, we consider the Cauchy problem of the following multidimensional ($d\geq2$) Oldroyd-B model:
\begin{equation}\label{0}
\begin{cases}
\pa_tu+u\cd\na u-\De u+\na \mathrm{p}=\D\ \tau,& (t,x)\in\mathbb{R}^{+} \times \mathbb{R}^{d}, \\
\pa_t\tau+u\cd\na \tau +\tau\omega-\omega \tau=0,& (t,x)\in\mathbb{R}^{+} \times \mathbb{R}^{d}, \\
\D\ u=0,& (t,x)\in\mathbb{R}^{+} \times \mathbb{R}^{d}, \\
(u, \tau)(t=0)=\left(u_0, \tau_0\right), & x\in\mathbb{R}^{d},
\end{cases}
\end{equation}
The unknown function $u=u(t, x)$ is the velocity field of a particle $x \in \mathbb{R}^d$ at a time $t \in \mathbb{R}^+$, $\tau=(\tau(t, x))_{i j}$ with $1\leq i,j\leq d$ is the conformation tensor in $\mathbb{R}^{d\times d}$, which describes the internal elastic forces that the constitutive molecules exert on each other, and $\mathrm{p}(t,x)$ is the pressure. The evolutionary equation for the conformation tensor $\tau$ is then driven by the vorticity tensor $\omega$, which stands for the skew-adjoint part of the deformation tensor $\nabla u$:
$$\omega=\frac{\nabla u-\nabla^{\top} u}{2}.$$

The Oldroyd-B model is a typical prototypical model for viscoelastic flows, which describes the hydrodynamics of some specific viscoelastic fluids. We refer the readers to
\cite{cm,fgo,o} for more detailed physical background and derivations of Oldroyd-B type model. The more general version of Oldroyd-B model (see \cite{cm,e1,o} etc.) reads:

\begin{equation}\label{01}
\begin{cases}
\partial_t u+u\cdot \nabla u-\nu \Delta u+\nabla \mathrm{p}=\mu_1\operatorname{div} \tau, & (t,x)\in\mathbb{R}^{+} \times \mathbb{R}^{d}, \\
\partial_t \tau+u \cdot \nabla \tau+\alpha\tau+\mathbb{Q}(u, \tau)=\mu_2 \mathbb{D}(u), & (t,x)\in\mathbb{R}^{+} \times \mathbb{R}^{d}, \\
\operatorname{div} u=0, & (t,x)\in\mathbb{R}^{+} \times \mathbb{R}^{d}, \\
(u, \tau)(t=0)=\left(u_0, \tau_0\right), & x\in \mathbb{R}^{d}.
\end{cases}
\end{equation}
Here the parameters $\mu_1>0, \mu_2>0$ and $\alpha \geq 0$. Moreover, $\nu$ is the coefficient of viscosity.
The bilinear term $\mathbb{Q}(u, \tau)$ is given by
$$
\mathbb{Q}(u, \tau)=\tau\omega -\omega\tau +\beta\f(\mathbb{D}(u) \tau+\tau \mathbb{D}(u)\g), \quad \beta \in[-1,1],
$$
where $\mathbb{D}(u)=\left(\nabla u+\nabla^{\top} u\right)/2$ is the symmetric contribution of the deformation tensor $\nabla u$.

We should note that \eqref{0} can be seen as a reduced version of the Oldroyd-B model \eqref{01}. Indeed,
when imposing the following restriction on the main parameters of \eqref{01}:
\begin{align}\label{02}
\mu_1=\nu=1,\quad \mu_2=0,\quad \alpha=\beta=0,
\end{align}
we can obtain our main system \eqref{0}. The first condition is introduced just for
the sake of a clear presentation, while the second and third conditions will play a major
role in the analysis techniques. The Oldroyd-B model \eqref{01} with \eqref{02} has been considered in \cite{AP}.

\subsection{A review of related results}
\quad
The modeling and analysis of the Oldroyd-B model has attracted much attention over the last decades because of its physical applications and mathematical significance. We present an overview of some well-posedness results concerning systems \eqref{0} and \eqref{01}.
Guillop\'{e} and Saut \cite{gs} obatained the existence and uniqueness of local strong solutions for system \eqref{01} in Sobolev space $H^s(\Omega)$. They also proved that, for sufficiently-smooth bounded domains $\Omega\subset\R^3$ and a sufficiently large regularity $s>0$, these solutions are global if the initial data as well as the coupling parameter are sufficiently small.
Lions and Masmoudi \cite{lm} addressed the bidimensional system \eqref{01} in a corotational setting $\beta=0$ and showed the existence and uniqueness of global-in-time weak solutions. Constantin and Kliegl \cite{p12} considered the the fully parabolic Oldroyd-B model \eqref{01} with $\alpha>0$ in dimension two and obtained the global well-posedness for strong solutions, where the corotational assumption $\beta=0$ is not needed. In the absence of the diffusive term and the presence of the damping term, Elgindi and Rousset \cite{e1,e2} addressed the well-posedness of a system related to \eqref{01}. By introducing a dissipative and damping mechanism on the evolution of
the deformation tensor, they considered the case of a null viscosity $\nu=0$ and obtained the global existence of classical solution for large initial data in dimension two when neglecting the bilinear term \eqref{02}. Furthermore, for a general bilinear term \eqref{02}, they proved the existence of global-in-time
classical solution for small initial data. An energy variational approach was also introduced by Lin, Liu and Zhang \cite{lilz} to describe the motion of viscoelastic fluids.
Lei, Liu and Zhou \cite{llz} proved existence and uniqueness of classical solutions near
equilibrium of system \eqref{01} for small initial data by assuming the domain to be periodic or
to be the whole space.

Another remarkable way to study the Oldroyd-B model is constructing solutions
in {\it scaling invariant spaces}.
We can check that if $(u, \tau)$ solves \eqref{0}, so does $\left(u_\ell(t, x), \tau_\ell(t, x)\right)$, where$$
\left(u_\ell(t, x), \tau_\ell(t, x)\right)=\left(\ell u(\ell^2 t, \ell x),  \tau(\ell^2 t, \ell x)\right), \quad \ell>0.
$$
This suggests us to choose initial data $(u_0, \tau_0)$ in ``critical spaces" whose norm is invariant for all $\ell>0$ (up to a constant independent of $\ell$) by the transformation $(u_0, \tau_0)(x) \mapsto(\ell u_0(\ell x), \tau_0(\ell x))$.
It is natural that $\dot{B}_{p, r}^{{d}/{p}-1}\times\dot{B}_{p, r}^{{d}/{p}}$ are critical spaces to the Oldroyd-B model \eqref{0}. Chemin and Masmoudi \cite{cm} first established the global well-posedness result of system
\eqref{01} in the critical $L^p$ framework. They particularly showed the local and global-in-time
existence of solutions for large and small initial data, respectively, under the assumption
of a smallness condition on the coupling parameters of system \eqref{01}. Some
improvements were made by Chen and Miao in \cite{chm}. Zi, Fang and Zhang \cite{zfz} removed the smallness
restriction on the coupling parameter. Fang and Zi \cite{z16} also constructed strong solutions to the Oldroyd-B
model \eqref{01} with large vertical initial velocity in the critical $L^2$ framework. For more results on the Oldroyd-B
model, we refer to see \cite{ch,hlz,pwz,wwx,wz,zy} and the references therein.

Anna and Paicu \cite{AP} studied the Oldroyd-B model \eqref{0} under suitable
condition on the initial data and they showed the existence of global-in-time classical solutions in dimension two for large initial data,
uniqueness in dimension two of strong solutions,
and existence and uniqueness of global-in-time strong solution in dimension $d\geq3$ with a Fujita-Kato smallness condition for the initial data.
In particular, they established the following well-posedness result
\begin{theorem}[\cite{AP}]\label{lw} Let $p \in[1,2d)$ and $d\geq3$. Assume that the initial data $u_0$ be a free divergence vector field in $\dot{B}_{p, 1}^{\frac{d}{p}-1}$, while $\tau_0$ belongs to $\dot{B}_{p, 1}^{\frac{d}{p}}$. Then there exists a time $T^*>0$ for which system \eqref{0} admits a unique local solution $(u, \tau)$ satisfying
$$
u \in \mathcal{C}\left([0, T^*), \dot{B}_{p, 1}^{\frac{d}{p}-1}\right) \cap L^1\left(0, T^*, \dot{B}_{p, 1}^{\frac{d}{p}+1}\right), \quad \tau \in \mathcal{C}\left([0, T^*), \dot{B}_{p, 1}^{\frac{d}{p}}\right).
$$
\end{theorem}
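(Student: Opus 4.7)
\medskip
\noindent\textbf{Proof proposal.}
The plan is to follow a classical Chemin--Lerner / Friedrichs iteration strategy adapted to the critical Besov setting. Introduce the solution spaces
$$E_T:=\LL^\infty_T(\B^{d/p-1}_{p,1})\cap L^1_T(\B^{d/p+1}_{p,1}),\qquad F_T:=\LL^\infty_T(\B^{d/p}_{p,1}),$$
and construct a sequence $(u^n,\tau^n)$ by linearising system \eqref{0}: start with $u^0=e^{t\De}u_0$, $\tau^0=\tau_0$, then given $(u^n,\tau^n)$ define $u^{n+1}$ as the Leray projected solution of the Stokes system
$$\pa_t u^{n+1}-\De u^{n+1}+\na \mathrm{p}^{n+1}=-u^n\cd\na u^n+\mu\,\D\tau^n,\qquad \D\,u^{n+1}=0,$$
and $\tau^{n+1}$ as the solution of the linear transport--rotation equation
$$\pa_t\tau^{n+1}+u^n\cd\na\tau^{n+1}+\tau^{n+1}\omega^n-\omega^n\tau^{n+1}=0,\qquad \tau^{n+1}(0)=\tau_0,$$
where $\omega^n=(\na u^n-\na^\top u^n)/2$.

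The first main step is to derive uniform bounds on a short time interval $[0,T]$. For $u^{n+1}$ one localises in frequency with $\DDe_j$ and applies the maximal regularity estimate for the heat semigroup, which gives
$$\|u^{n+1}\|_{E_T}\les \|u_0\|_{\B^{d/p-1}_{p,1}}+\|u^n\cd\na u^n\|_{L^1_T(\B^{d/p-1}_{p,1})}+\mu\|\D\tau^n\|_{L^1_T(\B^{d/p-1}_{p,1})}.$$
The nonlinear term is handled by the product law $\|fg\|_{\B^{d/p-1}_{p,1}}\les\|f\|_{\B^{d/p}_{p,1}}\|g\|_{\B^{d/p-1}_{p,1}}$, whose proof via Bony's paradifferential decomposition requires exactly $d/p-1+d/p>0$, i.e.\ $p<2d$; the coupling term is absorbed into $L^1_T(\B^{d/p}_{p,1})$ which follows from $\tau^n\in F_T$ by multiplying by $T$. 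For $\tau^{n+1}$, since the equation has no parabolic smoothing, one invokes the Besov transport estimate of Danchin to obtain
$$\|\tau^{n+1}\|_{F_T}\les e^{C\int_0^T\|\na u^n\|_{\B^{d/p}_{p,1}}\,\dd t}\bigl(\|\tau_0\|_{\B^{d/p}_{p,1}}+\|\tau^{n+1}\omega^n-\omega^n\tau^{n+1}\|_{L^1_T(\B^{d/p}_{p,1})}\bigr),$$
and the commutator-type term is controlled by the Besov algebra property of $\B^{d/p}_{p,1}$. Choosing $T$ small depending on the initial norms closes the bounds into a uniform estimate $\|u^n\|_{E_T}+\|\tau^n\|_{F_T}\le C(\|u_0\|_{\B^{d/p-1}_{p,1}}+\|\tau_0\|_{\B^{d/p}_{p,1}})$.

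The second step is convergence. One shows that the sequence is Cauchy at one derivative lower, namely in $\LL^\infty_T(\B^{d/p-2}_{p,1})\cap L^1_T(\B^{d/p}_{p,1})$ for the velocity difference and $\LL^\infty_T(\B^{d/p-1}_{p,1})$ for the tensor difference, using the same heat and transport estimates applied to the difference equations and the uniform bound of Step~1. Passing to the limit in the weak sense gives a solution in the required class; time continuity is then upgraded from $\LL^\infty_T$ to $\mathcal{C}([0,T^*),\,\cdot\,)$ via the equation and the summability of Littlewood--Paley blocks. Uniqueness is obtained by a parallel difference estimate at the same reduced regularity level, followed by a Gronwall argument.

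The main obstacle, in my view, is the tensor equation: because $\tau$ has no diffusion, its Besov regularity must be propagated by a pure transport--rotation estimate, so one needs the transport field to be $L^1_T(\mathrm{Lip})$, which is guaranteed by the embedding $\B^{d/p+1}_{p,1}\hookrightarrow W^{1,\infty}$ but forces the use of the $\ell^1$ summation index $r=1$. Simultaneously, the coupling term $\mu\,\D\tau$ in the momentum equation lies only in $L^1_T(\B^{d/p-1}_{p,1})$ (no gain from parabolic smoothing), so the smallness of $T$ must absorb this term without destroying the transport estimate for $\tau$. Balancing these two requirements, together with the product-law constraint $p<2d$, is the delicate part of the argument.
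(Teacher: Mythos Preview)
The paper does not give its own proof of this statement: Theorem~\ref{lw} is quoted verbatim as \cite[Theorem~1.5]{AP} and is used as a black box for the ill-posedness argument in Section~\ref{sec3}. There is therefore nothing in the present paper to compare your proposal against.

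That said, your outline is the standard Chemin--Masmoudi / Anna--Paicu scheme and is essentially correct: approximate by a linear Stokes--transport iteration, close the velocity in $\LL^\infty_T(\B^{d/p-1}_{p,1})\cap L^1_T(\B^{d/p+1}_{p,1})$ via heat maximal regularity (Lemma~\ref{rg-h}) and the product law of Lemma~\ref{ps1} (whence the constraint $p<2d$), close the tensor in $\LL^\infty_T(\B^{d/p}_{p,1})$ via the transport estimate of Lemma~\ref{rg-t} and the algebra property of $\B^{d/p}_{p,1}$, then prove Cauchy at one derivative less and conclude. Your identification of the two competing constraints---the $L^1_T(\mathrm{Lip})$ control needed for transport versus the mere $L^1_T(\B^{d/p-1}_{p,1})$ regularity of $\D\tau$ in the momentum equation---is exactly the point, and it is resolved just as you say, by taking $T$ small so that $T\|\tau\|_{\LL^\infty_T(\B^{d/p}_{p,1})}$ is absorbed. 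One minor remark: in your iteration you feed $\tau^n$ into the Stokes equation for $u^{n+1}$ but transport $\tau^{n+1}$ by $u^n$; this decoupling is fine for uniform bounds, but for the Cauchy estimate in Step~2 you should check that the index shift does not obstruct the contraction (it does not, but the bookkeeping deserves a line).
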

From the PDE's point of view, it is crucial to know if an equation which models a physical phenomenon is well-posed in the
Hadamard's sense: existence, uniqueness, and continuous dependence of the solutions with respect to the initial data. In particular, the lack of continuous dependence would cause incorrect solutions or non meaningful solutions. Indeed, this means that the corresponding equation is ill-posed. Many results with regard to the ill-posedness for some important nonlinear PDEs have been obtained. We would like to mention the following incompressible non-resistive magnetohydrodynamics (MHD) equations
\begin{equation}\label{mhd}
\begin{cases}
          \partial_tu+u\cd\na u-\Delta u+\nabla \mathrm{p}=b\cdot\nabla b, & (t,x)\in\mathbb{R}^{+} \times \mathbb{R}^{d}, \\
          \partial_t b + u\cdot\nabla b=b\cdot\nabla u, & (t,x)\in\mathbb{R}^{+} \times \mathbb{R}^{d}, \\
         \div \,u=\div\, b=0, & (t,x)\in\mathbb{R}^{+} \times \mathbb{R}^{d}, \\
         (u,b)(t=0)=(u_0,b_0),& x\in \R^d.
\end{cases}
\end{equation}
Here the initial data $(u_0,b_0)$ is divergence-free. $u=u(t,x)$ represents the velocity
field, $b=b(t,x)$ denotes the magnetic field and $\mathrm{p}=\mathrm{p}(t,x)$ is the scalar pressure, respectively.
Chen, Nie and Ye \cite{CNY} proved that the ill-posedness of the d-dimension $(d\geq2)$ MHD equations \eqref{mhd} in $\B^{d/p-1}_{p,r}(\R^d) \times \B^{d/p}_{p,r}(\R^d)$ with $1\leq p\leq\infty$ and $r>1$ by constructing complex-valued initial data for showing the norm inflation of the magnetic field which comes from the analysis of nonlinear term $b\cdot\nabla u$.
We refer to see more ill-posedness results on the incompressible Navier-Stokes equations \cite{Bou,wang,yon}, the compressible Navier-Stokes equations \cite{C3,CW,IO22} and so on.
In this paper, we are mainly focused on the ill-posedness of the Cauchy problem \eqref{0} in some critical Besov spaces. Precisely speaking, for the case $1<r\leq \infty$, it is still unknown whether the Cauchy problem  \eqref{0} in $\B^{d/p-1}_{p,r}(\R^d) \times \B^{d/p}_{p,r}(\R^d)$ is well-posed or ill-posed. In this paper, we shall answer this question.

\subsection{Main Result}
\quad
The main result of this paper is the following:
\begin{theorem}\label{th3}
Let $d\geq 2$, $1 \leq p \leq \infty$ and $1< r\leq \infty$. The Cauchy problem \eqref{0} is ill-posed in $\B^{d/p-1}_{p,r}(\R^d) \times \B^{d/p}_{p,r}(\R^d)$. More precisely, there exist a sequence of initial data $(u^n_{0},\tau^n_{0})\in \mathcal{S}(\R^d)$, positive time $\left\{T_n\right\}_n$ with $T_n \rightarrow 0$ as $n \rightarrow \infty$, and a sequence of corresponding smooth solutions $\left(u^n, \tau^n\right)$ such that
$$
\lim _{n \rightarrow \infty}\left(\|u^n_{0}\|_{\B^{\frac dp-1}_{p,r}} +\|\tau^n_{0}\|_{\B^{\frac dp}_{p,r}}\right)= 0
$$
and
$$
\left\|\tau^n(T_n,\cdot)\right\|_{\B^{\frac dp}_{p,r}} \geq c_0
$$
for some positive constant $c_0$.
\end{theorem}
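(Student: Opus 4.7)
The plan is to prove ill-posedness via a norm-inflation construction: I will produce smooth initial data $(u_0^n,\tau_0^n)$ with $\|u_0^n\|_{\B^{d/p-1}_{p,r}}+\|\tau_0^n\|_{\B^{d/p}_{p,r}}\to 0$ whose corresponding smooth solutions satisfy $\|\tau^n(T_n)\|_{\B^{d/p}_{p,r}}\geq c_0$ at times $T_n\to 0$. To guarantee the existence of a genuine solution on $[0,T_n]$, I arrange the data to lie also in the smaller space $\B^{d/p-1}_{p,1}\times\B^{d/p}_{p,1}$ (with norm there allowed to grow with $n$), so that Theorem~\ref{lw} supplies $\mathcal{C}([0,T_n];\B^{d/p-1}_{p,1}\times\B^{d/p}_{p,1})$ solutions. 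The instability is thus purely a feature of the weaker topology $r>1$, and the underlying mechanism is the failure of the embedding $\B^{d/p}_{p,r}\hookrightarrow L^\infty$: for $r>1$ one can build functions whose critical Besov norm is small but whose $L^\infty$ norm (or, more precisely, the bilinear output that matters for the stretching term) remains of unit order.

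Concretely, since the $\tau$-equation is purely hyperbolic and carries the bilinear commutator $\tau\omega-\omega\tau$, large pointwise values of $u_0^n$ (and hence of $\omega_0^n\sim\n u_0^n$) immediately drive norm inflation for $\tau$. I therefore take superpositions of dyadic bumps,
\[
\tau_0^n=\sum_{j\in\Lambda_n}a_j\,\phi(2^j x)\,\mathbf M_j,\qquad u_0^n=\sum_{j\in\Lambda_n}b_j\,\mathbf v(2^j x),
\]
with $\phi,\mathbf v$ fixed Schwartz profiles ($\mathbf v$ divergence-free) whose Fourier support lies in a thin annulus, constant matrices $\mathbf M_j$ chosen so that the relevant commutators $[\mathbf M_j,\mathbf M_k]$ do not vanish, and amplitudes $a_j,b_j$ together with the index set $\Lambda_n$ tuned so that $\|\tau_0^n\|_{\B^{d/p}_{p,r}}^r\simeq\sum_j a_j^r$ and $\|u_0^n\|_{\B^{d/p-1}_{p,r}}^r\simeq\sum_j 2^{-jr}b_j^r$ both tend to zero. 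The scope for simultaneously making these sums small while maintaining a lower bound on the nonlinear feedback below is precisely what the slack between $\ell^1$ and $\ell^r$ summability offers when $r>1$. From the Duhamel formulation
\[
\tau^n(t)=\tau_0^n-\int_0^t\bigl(u^n\cd\n\tau^n+[\tau^n,\omega^n]\bigr)(s)\,\dd s
\]
and the heuristic $e^{\sigma\Delta}\approx\mathrm{Id}$ on the relevant frequency band for $\sigma$ small, a first-order-in-time expansion gives
\[
\tau^n(T_n)=\tau_0^n-T_n\bigl(u_0^n\cd\n\tau_0^n+[\tau_0^n,\omega_0^n]\bigr)+O(T_n^2),
\]
and a careful analysis of the dyadic frequency interactions inside the first-order term yields a lower bound of size $c_0>0$ on its $\B^{d/p}_{p,r}$ norm for an appropriate choice of $T_n\to 0$.

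The main obstacle is controlling the $O(T_n^2)$ error so that it does not cancel the leading-order contribution in the $\B^{d/p}_{p,r}$ norm at $t=T_n$. The error has three sources: the heat-semigroup deviation $(\mathrm{Id}-e^{t\Delta})u_0^n$, the nonlinear self-interaction $u^n\cd\n u^n$ in the velocity equation, and the higher-order iterates produced by inserting the approximation back into the $\tau$-equation. Each is controlled via the standard product law $\|fg\|_{\B^{d/p}_{p,1}}\lesssim\|f\|_{\B^{d/p}_{p,1}}\|g\|_{\B^{d/p}_{p,1}}$ and the heat maximal regularity $\|u^n\|_{L^1_{T_n}\B^{d/p+1}_{p,1}}\lesssim\|u_0^n\|_{\B^{d/p-1}_{p,1}}+\|\tau^n\|_{L^1_{T_n}\B^{d/p}_{p,1}}$, worked out in the stronger $r=1$ topology, where the initial data lives by construction. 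The weaker $r>1$ summability is only used when extracting the lower bound on the leading term. The delicate balance of the parameters $a_j,b_j,\Lambda_n,T_n$, compatible with the short-time validity window of the expansion and with the desired ordering between the leading term and the error, is the technical crux of the argument.
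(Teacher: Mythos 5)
Your outline follows the same strategy as the paper: Schwartz data that are small in $\dot B^{\frac dp-1}_{p,r}\times\dot B^{\frac dp}_{p,r}$ but live in the $r=1$ spaces so that Theorem \ref{lw} provides the solution, a stacked sum of dyadic bumps in $\tau_0^n$ exploiting the failure of $\dot B^{\frac dp}_{p,r}\hookrightarrow L^\infty$ for $r>1$, and a first-order-in-time expansion in which the commutator $[\tau_0^n,\omega_0^n]$ produces an $O(1)$ contribution at a time $T_n\to0$. But as written there are genuine gaps. First, Theorem \ref{lw} only covers $1\le p<2d$ (and $d\ge3$), while the claim is for all $1\le p\le\infty$; the paper handles this by doing all solution estimates in a fixed $p_0<2d$, $r=1$ framework (Proposition \ref{pro3}) and transferring to arbitrary $p$ only through Bernstein, $\dot B^{\frac dp}_{p,r}\hookrightarrow\dot B^{0}_{\infty,r}$, for the lower bound and through Proposition \ref{pro1} (valid for all $p$) for the data; your proposal needs the same device and does not supply it. Second, your Eulerian Duhamel expansion keeps the transport term $u_0^n\cdot\nabla\tau_0^n$ at the same order in $T_n$ as the commutator, so you must either show it cannot cancel the commutator's contribution or design the data so it is of strictly lower order; the paper sidesteps this entirely by writing the equation along the Lagrangian flow, i.e.\ the decomposition \eqref{new}, and using the measure-preserving composition estimate of Lemma \ref{fh2}, so that only $P_0=\omega_0\tau_0-\tau_0\omega_0$ needs a lower bound, obtained by frequency-localizing the product at scale $2^n$ and evaluating at $x=0$ where all bumps of $d_n$ stack. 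With both $u_0^n$ and $\tau_0^n$ taken as multi-scale sums, as you propose, the cross-scale interactions are considerably harder to localize and bound below than in the paper's asymmetric choice (single-scale velocity, multi-scale stress).

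Third, and most importantly, the two quantitative steps you explicitly defer are the actual content of the proof. Since the $r=1$ norms of your data must blow up with $n$ (the $\ell^1$ sum you need to be large is comparable to $\|\tau_0^n\|_{\dot B^{\frac dp}_{p,1}}$), the symbol ``$O(T_n^2)$'' carries no information until every error term is quantified in $n$ and shown to be $o(1)$ for the specific $T_n$, which must simultaneously satisfy $T_n\,\|[\tau_0^n,\omega_0^n]\|_{L^\infty}\gtrsim1$ and $T_n2^{2j}\ll1$ on the active frequencies. The paper's concrete choices ($u_0^n$ a single bump at scale $2^n$ with amplitude $\Gamma_n2^n$, $\tau_0^n=\Gamma_n\sum_{j\le n/2}j^{-1}\tilde b_j$, $T_n=2^{-2n}(\Gamma_n^2\ln n)^{-1}$, $\Gamma_n=1/\ln\ln n$), together with Propositions \ref{pro1}--\ref{pro3} and the bounds on $\mathbf{I}_1,\mathbf{I}_2$, constitute exactly this missing verification; without an analogous explicit choice of $a_j$, $b_j$, $\Lambda_n$, $T_n$ and the accompanying estimates, the proposal remains a plausible plan rather than a proof.
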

\begin{remark}
Theorem \ref{th3} demonstrates that if $d\geq 2$, $1 \leq p \leq \infty$ and $1< r\leq \infty$, there exists a sequence of initial data which converges to zero in
$\B^{ d/p-1}_{p,r}(\R^d) \times \B^{d/p}_{p,r}(\R^d)$ and yields a sequence of solutions to \eqref{0} which does not converge to zero in $\B^{d/p-1}_{p,r}(\R^d) \times \B^{ d/p}_{p,r}(\R^d)$. In other words, \eqref{0} is ill-posed in $\B^{d/p-1}_{p,r}(\R^d) \times \B^{d/p}_{p,r}(\R^d)$ due to the discontinuity of the solution map at zero.
\end{remark}

\begin{remark}
We would like to mention that, Theorem \ref{th3} holds for the case when $p=r=2$ and demonstrates that \eqref{0} is ill-posed in the critical Sobolev spaces $\dot{H}^{d/2-1}(\R^d) \times \dot{H}^{d/2}(\R^d)$.
\end{remark}

\subsection{Main Idea}
\quad
Given a Lipschitz velocity field $u$, we may solve the following ODE to find the flow induced by $u$:
\begin{align}\label{ode}
\quad\begin{cases}
\frac{\dd}{\dd t}\phi(t,x)=u(t,\phi(t,x)),\\
\phi(0,x)=x,
\end{cases}
\end{align}
which is equivalent to the integral form
\bal\label{n}
\phi(t,x)=x+\int^t_0u(s,\phi(s,x))\dd s.
\end{align}
Considering the $\tau$-equation
\begin{align}\label{pde}
\quad\begin{cases}
\pa_t\tau+u\cd\na\tau=\omega\tau-\tau\omega:=P,\\
\tau(0,x)=\tau_0(x),
\end{cases}
\end{align}
then, from \eqref{ode} and \eqref{pde}, we get
\bbal
\tau(t,\phi(t,x))=\tau_0(x)+\int_0^tP(s,\phi(s,x))\dd s.
\end{align*}
We try to extract the worst nonlinear term $P$ from the $\tau$-equation, which leads to the discontinuous solution map of $\tau$ at time $t=0$ in the metric of $\B^{d/p}_{p,r}$.

Setting $P_0:=\omega_0\tau_0-\tau_0\omega_0$, then we has the following decomposition
\bal\label{new}
\tau(t,\phi(t,x))&=\tau_0(x)+tP_0(x)+\int_0^tP(s,\phi(s,x))-P_0(\phi(s,x))\dd s\nonumber\\
&\quad+\int_0^tP_0(\phi(s,x))-P_0(x)\dd s.
\end{align}
Our key argument is that, by constructing suitable initial data $(u_0,\tau_0)$, the term $tP_0(x)$ is the main contribution
to the discontinuity of solution map while the other terms can be small and thus be absorbed.
\begin{remark}
We would like to point out the main difference between our contribution and Chen-Nie-Ye's work \cite{CNY}. Compared with the initial velocity field $u_0$ and initial magnetic field $b_0$ which are complex-valued in \cite{CNY}, both initial data $u_0$ and $\tau_0$ in this work are real-valued and seem to be more physically significant. Also, our construction of $(u_0,\tau_0)$ makes the the analysis and computations more clean and may be enlightening to the ill-posedness for some important nonlinear PDEs. In summary, we proved that the Cauchy problem for the Oldroyd-B model \eqref{0} with real-valued initial data is ill-posed in $\B^{d/p-1}_{p,r}(\R^d) \times \B^{d/p}_{p,r}(\R^d)$ with $1\leq p\leq \infty$ and $1< r\leq\infty$ while it is not clear that the Cauchy problem for the MHD equations \eqref{mhd} with real-valued initial data is ill-posed in the same spaces.
\end{remark}

{\bf Organization of this paper.} In Section \ref{sec2}, we list some notations and  recall some lemmas which will be used in the sequel. In Section \ref{sec3} we present the proof of Theorem \ref{th3}.
\section{Preliminaries}\label{sec2}

{\bf Notation}\quad The metric $\nabla u$ denotes the gradient of $u$ with respect to the $x$ variable, whose $(i,j)$-th component is given by $(\nabla u)_{ij}=\pa_iu_j$ with $1\leq i,j\leq d$. Then the $(i,j)$-th component of the vorticity tensor $\omega=(\nabla u-\nabla^{\top} u)/2$ is given by $(\omega)_{i j}=(\partial_iu_j-\partial_ju_i)/2$.
Throughout this paper, $C$ stands for some positive constant independent of $n$, which may vary from line to line.
The symbol $A\approx B$ means that $C^{-1}B\leq A\leq CB$.
Given a Banach space $X$, we denote its norm by $\|\cdot\|_{X}$.
For $I\subset\R$, we denote by $\mathcal{C}(I;X)$ the set of continuous functions on $I$ with values in $X$.
We use $\mathcal{S}(\R^d)$ and $\mathcal{S}'(\R^d)$ to denote Schwartz functions and the tempered distributions spaces on $\R^d$, respectively.

Next, we will recall some facts about the Littlewood-Paley decomposition and the homogeneous Besov spaces (see \cite{B} for more details). Firstly, let us recall that for all $f\in \mathcal{S}'$, the Fourier transform $\widehat{f}$ is defined by
$$
(\mathcal{F} f)(\xi)=\widehat{f}(\xi)=\int_{\R^d}e^{-\mathrm{i}x\cdot\xi}f(x)\dd x \quad\text{for any}\; \xi\in\R^d.
$$
The inverse Fourier transform allows us to recover $f$ from $\widehat{f}$
$$
f(x)=\mathcal{F}^{-1}\widehat{f}(x)=(2\pi)^{-d}\int_{\R^d}e^{\mathrm{i}x\cdot\xi}\widehat{f}(\xi)\dd\xi.
$$
Choose a radial, non-negative, smooth function $\vartheta:\R\mapsto [0,1]$ such that
 ${\rm{supp}} \,\vartheta\subset B(0, 4/3)$ and $\vartheta(\xi)\equiv1$ for $|\xi|\leq3/4$.
Setting $\varphi(\xi):=\vartheta(\xi/2)-\vartheta(\xi)$, then we deduce that $\varphi$ has the following properties
\begin{itemize}
  \item ${\rm{supp}} \;\varphi\subset \left\{\xi\in \R^d: 3/4\leq|\xi|\leq8/3\right\}$;
  \item $\varphi(\xi)\equiv 1$ for $4/3\leq |\xi|\leq 3/2$;
  \item $\sum_{j\in \mathbb{Z}}\varphi(2^{-j}\xi)=1$ for any $0\neq\xi\in \R^d$.
\end{itemize}
For every $u\in \mathcal{S'}(\mathbb{R}^d)$, the homogeneous dyadic block ${\dot{\Delta}}_j$ is defined as follows
\bbal
&\dot{\Delta}_ju=\varphi(2^{-j}D)u=\mathcal{F}^{-1}\big(\varphi(2^{-j}\cdot)\mathcal{F}u\big)
=2^{dj}\int_{\R^d}\check{\varphi}\big(2^{j}(\cdot-y)\big)u(y)\dd y,\quad \forall j\in \mathbb{Z}.
\end{align*}
In the homogeneous case, the following Littlewood-Paley decomposition makes sense
$$
u=\sum_{j\in \mathbb{Z}}\dot{\Delta}_ju\quad \text{for any}\;u\in \mathcal{S}'_h(\R^d),
$$
where $\mathcal{S}'_h$ is given by
\begin{eqnarray*}
\mathcal{S}'_h:=\Big\{u \in \mathcal{S'}(\mathbb{R}^{d}):\; \lim_{j\rightarrow-\infty}\left\|\vartheta(2^{-j}D)u\right\|_{L^{\infty}}=0 \Big\}.
\end{eqnarray*}
We turn to the definition of the Besov spaces and norms which will come into play in our paper.
\begin{definition}[see \cite{B}]
Let $s\in\mathbb{R}$ and $(p,r)\in[1, \infty]^2$. The homogeneous Besov space $\dot{B}^s_{p,r}(\R^d)$ consists of all tempered distribution $f$ such that
\begin{align*}
\dot{B}_{p,r}^{s}=\Big\{f \in \mathcal{S}'_h(\mathbb{R}^{d}):\; \|f\|_{\dot{B}_{p,r}^{s}(\mathbb{R}^{d})}< \infty \Big\},
\end{align*}
where, for $1\leq r<\infty$,
$$\|f\|_{\dot{B}^{s}_{p,r}(\R^d)}:=\left(\sum_{j\in\mathbb{Z}}2^{sjr}\|\dot{\Delta}_jf\|^r_{L^p(\R^d)}\right)^{1/r}$$
and $\|f\|_{\dot{B}^{s}_{p,\infty}(\R^d)}:=\sup_{j\in\mathbb{Z}}2^{sj}\|\dot{\Delta}_jf\|_{L^p(\R^d)}$.
\end{definition}

For $0<T \leq \infty, s \in \mathbb{R}$ and $1 \leq p, r, \rho \leq \infty$, we set (with the usual convention if $r=\infty$ )
$$
\|f\|_{\tilde{L}_{T}^{\rho}\left(\dot{B}_{p, r}^{s}\right)}:=\left(\sum_{j \in \mathbb{Z}} 2^{j s r}\left\|\dot{\Delta}_{j} f\right\|_{L^{\rho}\left(0, T ; L^{p}\right)}^{r}\right)^{1/r}.
$$
Due  to the Minkowski inequality, one has
$$
\|f\|_{\tilde{L}^\rho_T\left(\dot{B}_{p, r}^s\right)} \leq C\|f\|_{L^\rho_T\left(\dot{B}_{p, r}^s\right)} \quad \text { for } \quad \rho \geq r
$$
while the opposite inequality holds when $\rho \leq r$.

The following Bernstein's inequalities will be used in the sequel.
\begin{lemma}[see \cite{B}] \label{lem2.1} Let $\mathcal{B}$ be a ball and $\mathcal{C}$ be an annulus. There exists a constant $C>0$ such that for all $k\in \mathbb{N}\cup \{0\}$, any positive real number $\lambda$ and any function $f\in L^p$ with $1\leq p \leq q \leq \infty$, we have
\begin{align*}
&{\rm{supp}}\ \widehat{f}\subset \lambda \mathcal{B}\;\Rightarrow\; \|\nabla^kf\|_{L^q}\leq C^{k+1}\lambda^{k+(\frac{d}{p}-\frac{d}{q})}\|f\|_{L^p},  \\
&{\rm{supp}}\ \widehat{f}\subset \lambda \mathcal{C}\;\Rightarrow\; C^{-k-1}\lambda^k\|f\|_{L^p} \leq \|\nabla^kf\|_{L^p} \leq C^{k+1}\lambda^k\|f\|_{L^p}.
\end{align*}
\end{lemma}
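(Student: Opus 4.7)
My plan is to prove both inequalities by writing $\nabla^k f$ (respectively $f$) as a convolution with a dilated Schwartz kernel and then invoking Young's convolution inequality; the scaling of the kernel automatically produces the claimed power of $\lambda$. This is the approach taken in \cite{B}.

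For the ball case, pick $\phi\in\mathcal D(\R^d)$ equal to $1$ on a neighborhood of $\mathcal B$, so that $\mathrm{supp}\,\widehat f\subset\lambda\mathcal B$ gives $\widehat f(\xi)=\phi(\xi/\lambda)\widehat f(\xi)$. Setting $g_k:=\mathcal F^{-1}\bigl((i\xi)^k\phi(\xi)\bigr)$ one computes $\mathcal F^{-1}\bigl((i\xi)^k\phi(\xi/\lambda)\bigr)(x)=\lambda^{k+d}g_k(\lambda x)$, and Young's inequality with $1+\tfrac1q=\tfrac1p+\tfrac1r$ then gives
\[
\|\nabla^k f\|_{L^q}\le\lambda^{k+\frac dp-\frac dq}\|g_k\|_{L^r}\|f\|_{L^p}.
\]
The serious step is the uniform bound $\|g_k\|_{L^r}\le C^{k+1}$. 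I would obtain it by estimating $(1+|x|^2)^N g_k(x)=\mathcal F^{-1}\bigl((1-\Delta_\xi)^N((i\xi)^k\phi)\bigr)(x)$ for a fixed $N$ large enough that $(1+|x|^2)^{-N}\in L^r$. On $\mathrm{supp}\,\phi\subset\{|\xi|\le R\}$, each derivative falling on $(i\xi)^k$ contributes a factor at most $\tfrac{k!}{(k-|\alpha|)!}R^{k-|\alpha|}\le k^{|\alpha|}R^k$, and the polynomial-in-$k$ losses are absorbed into an exponential $C^k$ for any $C>R$.

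For the annulus case, the upper bound is identical with $\phi$ replaced by $\widetilde\phi\in\mathcal D(\R^d\setminus\{0\})$ equal to $1$ on $\mathcal C$. The lower bound uses that $\widetilde\phi$ vanishes near the origin, so $K_j(\xi):=-i\xi_j|\xi|^{-2}\widetilde\phi(\xi)$ is a smooth compactly supported multiplier; combined with the identity $\sum_j\xi_j^2/|\xi|^2=1$ on $\mathrm{supp}\,\widetilde\phi$, this yields
\[
\widehat f(\xi)=\sum_{j=1}^d\lambda^{-1}K_j(\xi/\lambda)\,\widehat{\partial_j f}(\xi).
\]
Inverting the Fourier transform and applying Young's inequality with exponents $(1,p,p)$ produces $\lambda\|f\|_{L^p}\le C\|\nabla f\|_{L^p}$, and iterating $k$ times (the Fourier support of each $\partial^\alpha f$ remains inside $\lambda\mathcal C$) delivers the stated reverse bound after enlarging $C$. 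Throughout the argument the only genuinely delicate point is the exponential-in-$k$ bookkeeping for $\|g_k\|_{L^r}$; the remaining steps are routine Fourier scaling, support analysis, and convolution estimates.
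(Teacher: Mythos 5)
Your argument is correct and is essentially the standard proof of Bernstein's inequalities from the cited reference \cite{B}: convolution with a dilated cut-off kernel plus Young's inequality for the direct bounds, and the smooth multipliers $\xi_j|\xi|^{-2}\widetilde\phi(\xi)$ supported away from the origin for the reverse bound on an annulus. The paper itself does not reprove this lemma, and your handling of the delicate point (the $C^{k+1}$ growth of $\|g_k\|_{L^r}$ uniformly in $r$, via $(1-\Delta_\xi)^N$ and polynomial-in-$k$ losses) is the right one.
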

As a direct result of Bernstein's inequalities and the definition of Besov Spaces $\dot{B}_{p, r}^s$, we have

\begin{lemma}[see \cite{B}]\label{ps0}
\begin{itemize}
  \item There exists a constant $C>0$ such that
$$
C^{-1}\|f\|_{\dot{B}_{p, r}^s} \leq\|\nabla f\|_{\dot{B}_{p, r}^{s-1}} \leq C\|f\|_{\dot{B}_{p, r}^s} .
$$
  \item If $p \in[1, \infty]$ then $\dot{B}_{p, 1}^{d/ p}(\R^d) \hookrightarrow \dot{B}_{p, \infty}^{d / p} \cap L^{\infty}(\R^d)$. Furthermore, for any $p \in[1, \infty], \dot{B}_{p, 1}^{d/ p}(\R^d)$ is an algebra embedded in $L^{\infty}(\R^d)$.
\end{itemize}
\end{lemma}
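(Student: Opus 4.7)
The plan is to follow the Lagrangian strategy sketched in the ``Main Idea'' section. For smooth initial data $(u_0^n,\tau_0^n)$, Theorem~\ref{lw} produces a unique local solution on some interval $[0,T_n^*]$, and composing the $\tau$-equation with the flow map $\phi^n$ of $u^n$ yields the decomposition \eqref{new}
\[
\tau^n(t,\phi^n(t,x)) = \tau_0^n(x) + t\,P_0^n(x) + R_1^n(t,x) + R_2^n(t,x),
\]
where $P_0^n:=\omega_0^n\tau_0^n-\tau_0^n\omega_0^n$ and $R_1^n,R_2^n$ are the two integral remainders in \eqref{new}. The goal is to construct $(u_0^n,\tau_0^n)$ that is small in $\B^{d/p-1}_{p,r}\times\B^{d/p}_{p,r}$ but whose commutator $P_0^n$ has anomalously large $\B^{d/p}_{p,r}$ norm, so that for a well-chosen $T_n\to 0$ the linear-in-$t$ term dominates the remainders and yields $\|\tau^n(T_n,\cdot)\|_{\B^{d/p}_{p,r}}\geq c_0$. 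Because $\phi^n(T_n,\cdot)$ is bi-Lipschitz close to the identity for small $T_n$, the $\B^{d/p}_{p,r}$-lower bound transfers from Lagrangian back to Eulerian coordinates.

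I would build the data by exploiting the failure of $\B^{d/p}_{p,r}$ to be an algebra for $r>1$. Pick a Schwartz profile $\psi$ with $\widehat{\psi}$ supported in a thin annulus near $|\xi|=1$, depending on coordinates transverse to a fixed unit vector $\vec e$ (so that $\psi\vec e$ is automatically divergence-free), and a constant symmetric matrix $M$ chosen so that the commutator $[\Omega_\psi,\psi M]$ is not identically zero, where $\Omega_\psi$ is the skew-adjoint part of $\na(\psi\vec e)$. For a lacunary sequence $j_1<\cdots<j_n$ with $j_{k+1}-j_k$ sufficiently large and amplitudes $\delta_n\to 0$, set
\[
u_0^n = \delta_n n^{-1/r}\sum_{k=1}^n 2^{j_k}\psi(2^{j_k}x)\,\vec e,\qquad \tau_0^n = \delta_n n^{-1/r}\sum_{k=1}^n \psi(2^{j_k}x)\,M.
\]
Bernstein's inequalities (Lemma~\ref{lem2.1}) give $\|u_0^n\|_{\B^{d/p-1}_{p,r}}+\|\tau_0^n\|_{\B^{d/p}_{p,r}}\lesssim\delta_n$. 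Expanding $P_0^n$, its diagonal term in the $k$-th shell is the fixed profile $G:=\psi\,[\Omega_\psi,M]$ rescaled to frequency $2^{j_k}$ with amplitude $\delta_n^2 n^{-2/r}\,2^{2j_k}$. Using the dilation invariance of the $\B^{d/p}_{p,r}$-seminorm at the critical index $d/p$ together with the disjointness of dyadic supports guaranteed by the lacunary gap, one extracts
\[
\|P_0^n\|_{\B^{d/p}_{p,r}}\gtrsim \delta_n^2\,n^{-2/r}\Bigl(\sum_{k=1}^n 2^{2j_kr}\Bigr)^{1/r},
\]
which is arbitrarily large relative to $\delta_n$ once the $j_k$ are chosen large enough. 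A careful balance of $\delta_n$, $j_k$ and $T_n$ then yields $T_n\to 0$, $\delta_n\to 0$ and $T_n\|P_0^n\|_{\B^{d/p}_{p,r}}\gtrsim 1$ simultaneously.

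For the remainders, both gain an extra factor of $T_n$ compared with the main term. For $R_2^n$, the flow displacement estimate $\|\phi^n(s,\cdot)-\mathrm{Id}\|_{L^\infty}\lesssim s\,\|u^n\|_{L^\infty_s L^\infty}$ combined with the product rule in Besov spaces (Lemma~\ref{ps0}) gives $\|R_2^n\|_{\B^{d/p}_{p,r}}\lesssim T_n^2\,\|u^n\|_{L^\infty_{T_n}\B^{d/p}_{p,1}}\,\|\na P_0^n\|_{\B^{d/p-1}_{p,r}}$. For $R_1^n$, rewriting $P^n-P_0^n=\int_0^s\pa_{s'}(\omega^n\tau^n-\tau^n\omega^n)\,\dd s'$ and controlling each time derivative via the system \eqref{0} produces $\|R_1^n\|_{\B^{d/p}_{p,r}}=O(T_n^2)$. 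Since both are lower order than the main term $T_n\|P_0^n\|_{\B^{d/p}_{p,r}}$, we conclude
\[
\|\tau^n(T_n,\cdot)\|_{\B^{d/p}_{p,r}}\gtrsim T_n\|P_0^n\|_{\B^{d/p}_{p,r}}-o(1)\ge c_0.
\]

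The main obstacle is the lower bound $\|P_0^n\|_{\B^{d/p}_{p,r}}\gtrsim \delta_n^2 n^{-2/r}\bigl(\sum 2^{2j_kr}\bigr)^{1/r}$: the corresponding $\ell^r$-upper bound is automatic, but extracting a matching lower bound requires isolating the contribution of each diagonal commutator to the dyadic block $\dot\Delta_{j_k}P_0^n$ and verifying that the off-diagonal cross-interactions $(\omega_0^{(k)},\tau_0^{(l)})$ with $k\ne l$ do not cancel it at the relevant scales; the thin annular support of $\widehat{\psi}$ together with the lacunary gap between the $j_k$ is what makes this isolation possible. A secondary technical point is ensuring that the local existence interval of $(u^n,\tau^n)$ contains $[0,T_n]$: since the well-posedness result of Theorem~\ref{lw} is stated in the $r=1$ scale, this forces $\delta_n$ to decay quickly enough to keep $\|u_0^n\|_{\B^{d/p}_{p,1}}$ and $\|\tau_0^n\|_{\B^{d/p}_{p,1}}$ manageable, yet slowly enough for the main-term lower bound to persist.
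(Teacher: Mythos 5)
Your proposal does not address the statement it is supposed to prove. The statement in question is Lemma~\ref{ps0}, a standard pair of facts about homogeneous Besov spaces: the norm equivalence $C^{-1}\|f\|_{\dot{B}_{p,r}^s}\leq\|\nabla f\|_{\dot{B}_{p,r}^{s-1}}\leq C\|f\|_{\dot{B}_{p,r}^s}$, and the embedding and algebra properties of $\dot{B}_{p,1}^{d/p}(\R^d)$. What you have written instead is a proof sketch of the main ill-posedness result, Theorem~\ref{th3}: construction of oscillatory initial data, the Lagrangian decomposition \eqref{new}, lower bounds on the commutator $P_0$, and control of the remainders. None of that bears on Lemma~\ref{ps0}; indeed your sketch \emph{invokes} Lemma~\ref{ps0} as a tool (``the product rule in Besov spaces (Lemma~\ref{ps0})''), which it cannot do if Lemma~\ref{ps0} is the thing being proved.

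For the record, the paper treats Lemma~\ref{ps0} as a known result quoted from the reference \cite{B}, noting only that it is ``a direct result of Bernstein's inequalities and the definition of Besov spaces.'' A self-contained argument would go as follows. For the first bullet, each block $\dot{\Delta}_jf$ has Fourier support in the annulus $2^j\mathcal{C}$, so the second Bernstein inequality in Lemma~\ref{lem2.1} with $k=1$ gives $C^{-1}2^j\|\dot{\Delta}_jf\|_{L^p}\leq\|\nabla\dot{\Delta}_jf\|_{L^p}\leq C2^j\|\dot{\Delta}_jf\|_{L^p}$; multiplying by $2^{j(s-1)}$ and taking the $\ell^r$ norm over $j\in\Z$ yields the claim. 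For the second bullet, the first Bernstein inequality gives $\|\dot{\Delta}_jf\|_{L^\infty}\leq C2^{jd/p}\|\dot{\Delta}_jf\|_{L^p}$, so summing over $j$ shows $\|f\|_{L^\infty}\leq C\|f\|_{\dot{B}_{p,1}^{d/p}}$ (using $f=\sum_j\dot{\Delta}_jf$ for $f\in\mathcal{S}'_h$), while the embedding into $\dot{B}_{p,\infty}^{d/p}$ is immediate from $\ell^1\hookrightarrow\ell^\infty$; the algebra property then follows from Bony's paraproduct decomposition $fg=T_fg+T_gf+R(f,g)$ together with the $L^\infty$ control just established. If you intend to submit a proof of Lemma~\ref{ps0}, you need to produce an argument along these lines rather than a sketch of the theorem that uses it.
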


\begin{lemma}[see \cite{B}]\label{ps}
Let $s>0,1 \leq p \leq \infty$ and $1 \leq \rho, \rho_{1}, \rho_{2}, \rho_{3}, \rho_{4} \leq \infty$. Then
$$
\|fg\|_{\LL_{T}^{\rho}\left(\dot{B}_{p, r}^{s}\right)} \leq C\left(\|f\|_{L_{T}^{\rho_{1}}\left(L^{\infty}\right)}\|g\|_{\LL_{T}^{\rho_{2}}\left(\dot{B}_{p, r}^{s}\right)}+\|g\|_{L_{T}^{\rho_{3}}\left(L^{\infty}\right)}\|g\|_{\LL_{T}^{\rho_{4}}\left(\dot{B}_{p, r}^{s}\right)}\right),
$$
where
$$
\frac{1}{\rho}=\frac{1}{\rho_{1}}+\frac{1}{\rho_{2}}=\frac{1}{\rho_{3}}+\frac{1}{\rho_{4}}.
$$
\end{lemma}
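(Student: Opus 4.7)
My plan follows the decomposition \eqref{new} from the Main Idea subsection: along the flow $\phi$ generated by $u$, the $\tau$-equation integrates to $\tau(t,\phi(t,x)) = \tau_0(x) + t\,P_0(x) + R_1(t,x) + R_2(t,x)$, with $P_0 := \omega_0\tau_0-\tau_0\omega_0$, $R_1 := \int_0^t[P(s,\phi)-P_0(\phi)]\dd s$ and $R_2 := \int_0^t[P_0(\phi)-P_0]\dd s$. I will (i) construct smooth $(u_0^n,\tau_0^n)$ whose critical Besov norms tend to $0$ but whose commutator $P_0^n$ is unbounded in $\B^{d/p}_{p,r}$; (ii) choose $T_n := c_0/\|P_0^n\|_{\B^{d/p}_{p,r}}\to 0$, so that $\|T_n P_0^n\|_{\B^{d/p}_{p,r}}=c_0$; (iii) show that $R_1(T_n,\cdot)$ and $R_2(T_n,\cdot)$ are $o(1)$ in $\B^{d/p}_{p,r}$; and (iv) pull back through the bi-Lipschitz flow inverse $\phi(T_n,\cdot)^{-1}$, which is close to the identity.

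The heart of the argument, and the place where the hypothesis $r>1$ enters essentially, is the construction in (i). The strict embedding $\B^{d/p}_{p,1}\hookrightarrow\B^{d/p}_{p,r}$ opens an $\ell^r$-vs-$\ell^1$ gap that a lacunary sum of $n$ frequency-localized bumps can exploit. I plan to take, schematically,
\bbal
u_0^n = \la_n\sum_{j=1}^n 2^{-k_j(d/p-1)}\,\p\bigl[\phi(2^{k_j}\cdot)\,\vv_j\bigr], \qquad \tau_0^n = \la_n\sum_{j=1}^n 2^{-k_j d/p}\,\phi(2^{k_j}\cdot)\,M_j,
\end{align*}
for a Schwartz scalar $\phi$ with $\widehat\phi$ supported in a thin annulus, the Leray projector $\p$, a sufficiently lacunary integer sequence $\{k_j\}$, vectors $\vv_j$ and constant matrices $M_j$ chosen so that the diagonal commutators $\omega(U_j)T_j-T_j\omega(U_j)$ are nonzero and fall into prescribed dyadic shells, and a weight $\la_n\to 0$. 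Quasi-orthogonality of the blocks gives $\|u_0^n\|_{\B^{d/p-1}_{p,r}}+\|\tau_0^n\|_{\B^{d/p}_{p,r}}\approx \la_n n^{1/r}$, while the diagonal contributions to $P_0^n$ sit in $n$ distinct shells with amplitudes that, after careful choice of $(\la_n,\{k_j\})$, force $\|P_0^n\|_{\B^{d/p}_{p,r}}\to\infty$; off-diagonal cross-terms either vanish by antisymmetry and the divergence-free constraint or land in further shells. For the more delicate small-$p$ range one likely has to enrich the template (e.g.\ replace each single bump by a pair of nearby-frequency bumps to extract a low-frequency beat from the product); tuning the construction uniformly across the full parameter range $(p,d,r)$ is the main technical obstacle.

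Step (iii) is relatively standard. Because $(u_0^n,\tau_0^n)$ are smooth and compactly Fourier-supported they lie in $\B^{d/p-1}_{p,1}\times\B^{d/p}_{p,1}$, so Theorem \ref{lw} supplies a local smooth solution on some $[0,T^*_n]$ with $T^*_n>T_n$ for large $n$. Parabolic smoothing for the $u$-equation and transport-type bounds for the $\tau$-equation, via Lemmas \ref{ps0}--\ref{ps}, yield uniform controls on $\|u\|_{\LL^\infty_{T_n}(\B^{d/p-1}_{p,r})\cap L^1_{T_n}(\B^{d/p+1}_{p,r})}$ and $\|\tau\|_{\LL^\infty_{T_n}(\B^{d/p}_{p,r})}$ of the order of the (vanishing) initial data plus a quadratic correction. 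From \eqref{n} these bounds give $\|\phi(t,\cdot)-\mathrm{id}\|_{L^\infty}\les t\|u\|_{L^\infty_tL^\infty}\to 0$ on $[0,T_n]$. A mean-value/composition estimate bounds $\|R_2(T_n,\cdot)\|_{\B^{d/p}_{p,r}}$ by $T_n\|u\|_{L^\infty_tL^\infty}\|P_0^n\|_{\B^{d/p+1}_{p,r}}$, and Lemma \ref{ps} bounds $\|R_1(T_n,\cdot)\|_{\B^{d/p}_{p,r}}$ by $T_n$ times the continuity modulus of $\omega\tau$ in $\B^{d/p}_{p,r}$ at $t=0$; both are $o(1)$ as long as the lacunary scales satisfy $T_n\|P_0^n\|_{\B^{d/p+1}_{p,r}}\to 0$, a condition that can be arranged by tuning $\{k_j\}$. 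A triangle inequality then yields $\|\tau(T_n,\phi(T_n,\cdot))\|_{\B^{d/p}_{p,r}}\geq c_0/2$ for large $n$, and composition with $\phi(T_n,\cdot)^{-1}$, which is $O(T_n)$-close to the identity in Lipschitz norm and hence bounded on $\B^{d/p}_{p,r}$ with operator norm close to $1$, transfers this to $\|\tau^n(T_n,\cdot)\|_{\B^{d/p}_{p,r}}\geq c_0/4$, completing the proof.
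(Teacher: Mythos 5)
Your proposal does not address the statement at hand. The statement you were asked to prove is Lemma \ref{ps}, a bilinear ``tame'' product estimate in the Chemin--Lerner spaces $\LL^\rho_T(\dot B^s_{p,r})$ for $s>0$. What you have written instead is an outline of the proof of Theorem \ref{th3} (construction of initial data, the decomposition \eqref{new}, estimates on $\mathbf{I}_1$ and $\mathbf{I}_2$, the flow map $\phi$, and the discontinuity of the solution map). None of that is relevant to, or constitutes a proof of, the product law; indeed your outline \emph{invokes} Lemma \ref{ps} as a tool in step (iii), which would be circular if it were offered as its proof.

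A proof of Lemma \ref{ps} should proceed along entirely different, and much more elementary, lines: write $fg=T_fg+T_gf+R(f,g)$ via Bony's decomposition; for each fixed time use the standard paraproduct and remainder estimates, namely $\|T_fg\|_{\dot B^s_{p,r}}\lesssim\|f\|_{L^\infty}\|g\|_{\dot B^s_{p,r}}$ for any $s$, and $\|T_gf+R(f,g)\|_{\dot B^s_{p,r}}\lesssim\|g\|_{L^\infty}\|f\|_{\dot B^s_{p,r}}$ for $s>0$ (this is where the hypothesis $s>0$ is used, to sum the remainder); then, because the $\LL^\rho_T$ norm takes the $L^\rho$ norm in time on each dyadic block \emph{before} the $\ell^r$ summation, apply H\"older's inequality in time blockwise with the exponent splittings $\tfrac1\rho=\tfrac1{\rho_1}+\tfrac1{\rho_2}=\tfrac1{\rho_3}+\tfrac1{\rho_4}$ and conclude with Minkowski/$\ell^r$ summation. (As an aside, the right-hand side of the lemma as printed almost certainly contains a typographical slip: the second term should read $\|g\|_{L^{\rho_3}_T(L^\infty)}\|f\|_{\LL^{\rho_4}_T(\dot B^s_{p,r})}$, which is what the symmetric tame estimate produces and what is actually used later in the paper, e.g.\ in \eqref{u1} and \eqref{t1}.) As submitted, your proposal proves nothing about the lemma and must be redone from scratch along these lines.
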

\begin{lemma}[see \cite{ny1}]\label{ps1}
Let $1\leq \rho,\rho_1,\rho_2\leq \infty$ with $\frac1\rho=\frac1\rho_1+\frac1\rho_2$ and $1\leq p<2d$. Then
\bbal
\|fg\|_{\LL^\rho_T(\B^{\frac dp-1}_{p,1})}\leq C\|f\|_{\LL^{\rho_1}_T(\B^{\frac dp-1}_{p,1})}\|g\|_{\LL^{\rho_2}_T(\B^{\frac dp}_{p,1})}.
\end{align*}
\end{lemma}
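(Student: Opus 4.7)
\medskip

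\noindent\textbf{Proposal for the proof of Lemma \ref{ps1}.}

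The natural tool is Bony's homogeneous paraproduct decomposition
\[
fg = T_f g + T_g f + R(f,g),\qquad T_f g := \sum_{j\in\Z} S_{j-1}f\,\DDe_j g,\qquad R(f,g) := \sum_{j\in\Z}\DDe_j f\,\widetilde{\DDe}_j g,
\]
where $\widetilde{\DDe}_j = \DDe_{j-1}+\DDe_j+\DDe_{j+1}$. I would bound each piece in $\LL^{\rho}_T(\B^{d/p-1}_{p,1})$ and, only at the end, combine the time integrations via Hölder with $1/\rho = 1/\rho_1+1/\rho_2$; the spatial work is the substance of the argument.

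For $T_f g$, the regularity of $f$ is $d/p-1 < d/p$, so a telescoping of Lemma \ref{lem2.1} gives the \emph{lossy} bound $\|S_{j-1}f(t)\|_{L^\infty}\lesssim 2^{j}\|f(t)\|_{\B^{d/p-1}_{p,1}}$. Since the Fourier support of $S_{j-1}f\,\DDe_j g$ is an annulus of size $2^{j}$, only $|j-k|\le 4$ contributes to $\DDe_k(T_f g)$, and I estimate
\[
2^{k(d/p-1)}\|\DDe_k(T_f g)(t)\|_{L^p}\;\les\;\|f(t)\|_{\B^{d/p-1}_{p,1}}\sum_{|j-k|\le 4}2^{j\,d/p}\|\DDe_j g(t)\|_{L^p},
\]
so Young and $\ell^1$-summation in $k$ yield $\|T_f g(t)\|_{\B^{d/p-1}_{p,1}}\lesssim \|f(t)\|_{\B^{d/p-1}_{p,1}}\|g(t)\|_{\B^{d/p}_{p,1}}$. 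For $T_g f$ the factor $S_{j-1}g$ now sits at regularity $d/p$, so $\|S_{j-1}g(t)\|_{L^\infty}\lesssim \|g(t)\|_{\B^{d/p}_{p,1}}$ \emph{uniformly} in $j$ (Lemma \ref{ps0}), and the same annular orthogonality gives $\|T_g f(t)\|_{\B^{d/p-1}_{p,1}}\lesssim \|g(t)\|_{\B^{d/p}_{p,1}}\|f(t)\|_{\B^{d/p-1}_{p,1}}$. Both paraproduct bounds are pointwise in $t$, so Hölder in time against $1/\rho=1/\rho_1+1/\rho_2$ is immediate.

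The genuinely delicate piece is the remainder $R(f,g)$, and this is where the constraint $p<2d$ is consumed. Unlike the paraproducts, $\DDe_j f\,\widetilde{\DDe}_j g$ has Fourier support in a \emph{ball} of radius $\sim 2^j$, so the sum defining $\DDe_k R(f,g)$ runs over $j\ge k-N_0$, a ``tail'' that must be squeezed by an exponentially decaying kernel in $k-j$. The sharp idea is to place the product in $L^{p/2}$ by Hölder, $\|\DDe_j f\,\widetilde{\DDe}_j g\|_{L^{p/2}}\le \|\DDe_j f\|_{L^p}\|\widetilde{\DDe}_j g\|_{L^p}$, and then invoke the Bernstein step of Lemma \ref{lem2.1} from $L^{p/2}$ up to $L^p$ on the localised block of frequency $2^k$, producing
\[
\|\DDe_k(\DDe_j f\,\widetilde{\DDe}_j g)\|_{L^p}\;\les\;2^{k d/p}\|\DDe_j f\|_{L^p}\|\widetilde{\DDe}_j g\|_{L^p}\qquad(j\ge k-N_0).
\]
Writing $a_j := 2^{j(d/p-1)}\|\DDe_j f\|_{L^p}$ and $b_j := 2^{jd/p}\|\widetilde{\DDe}_j g\|_{L^p}$, this rearranges to
\[
2^{k(d/p-1)}\|\DDe_k R(f,g)\|_{L^p}\;\les\;\sum_{j\ge k-N_0}2^{(k-j)(2d/p-1)}a_j b_j,
\]
whose kernel is geometrically summable \emph{precisely when} $2d/p-1>0$, i.e.\ $p<2d$. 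Young then gives the $\ell^1_k$ bound $\|f(t)\|_{\B^{d/p-1}_{p,1}}\|g(t)\|_{\B^{d/p}_{p,1}}$.

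The main obstacle is thus the remainder: the naive Hölder split $\|\DDe_j f\,\widetilde{\DDe}_j g\|_{L^p}\lesssim 2^{jd/p}\|\DDe_j f\|_{L^p}\|\widetilde{\DDe}_j g\|_{L^p}$ produces a kernel $2^{(k-j)(d/p-1)}$, which is only summable for $p<d$ and fails at $p=d$; one must therefore replace $2^{jd/p}$ by $2^{kd/p}$ through the $L^{p/2}$-route, which in turn requires $p/2\ge 1$. The remaining sub-case $1\le p<2$ is already covered by the cruder kernel $2^{(k-j)(d/p-1)}$ (summable since $d\ge 2$ forces $p<d$), so the two regimes cover $1\le p<2d$ exactly. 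After adding the three pieces, applying Hölder in time via $\tfrac1\rho=\tfrac1{\rho_1}+\tfrac1{\rho_2}$ inside the $\LL^{\rho}_T$ norms and invoking the Minkowski inequality relating $\LL^{\rho}_T$ to $L^{\rho}_T$ completes the proof.
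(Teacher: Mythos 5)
Your proof is correct. Note that the paper does not actually prove this lemma --- it is quoted verbatim from the reference [ny1] --- so there is no in-paper argument to compare against; your Bony-decomposition proof is the standard route to such product laws and matches what the cited source does. You correctly locate where the hypothesis $p<2d$ is consumed (summability of the remainder kernel $2^{(k-j)(2d/p-1)}$ over the tail $j\ge k-N_0$), and your case split for the remainder --- the $L^{p/2}$ Bernstein step for $p\ge 2$ versus the crude $L^\infty$ estimate for $p<2$ (where $d\ge2$ guarantees $d/p-1>0$) --- is exactly the care needed to cover the full range $1\le p<2d$; the block-by-block H\"older in time inside the $\tilde L^\rho_T$ norms is likewise routine.
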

Next, we recall the regularity estimates for the transport and heat equations.
\begin{lemma}[see \cite{B}]\label{rg-t} Let $1 \leq p \leq p_1 \leq \infty$ and $s \in\left(-d \min \left\{\frac{1}{p_1}, 1-\frac{1}{p}\right\}, 1+\frac{d}{p_1}\right]$. Let $v$ be a vector field such that $\nabla v \in L_T^1\left(\dot{B}_{p_1, 1}^{d/{p_1}}(\mathbb{R}^d)\right)$. There exists a constant $C$ depending on $p, s, p_1$ such that all solutions $u \in \LL_T^{\infty}\left(\dot{B}_{p, 1}^s(\mathbb{R}^d)\right)$ of the transport equation
\begin{equation*}
\begin{cases}
\partial_t u+v \cdot \nabla u=g\in L_{\text{loc }}^1\left(\mathbb{R}^{+}; \dot{B}_{p, 1}^s(\mathbb{R}^d)\right), \\
u(t=0)=u_0\in \dot{B}_{p, 1}^s(\mathbb{R}^d).
\end{cases}
\end{equation*}
Furthermore, we have, for $t \in[0, T]$,
$$
\|u\|_{\LL_T^{\infty}(\dot{B}_{p, 1}^s)} \leq e^{C V_{p_1}(t)}\left(\left\|u_0\right\|_{\dot{B}_{p, 1}^s}+\int_0^t e^{-C V_{p_1}(\tau)}\|g(\tau)\|_{\dot{B}_{p, 1}^s} \mathrm{~d} \tau\right),
$$
where $V_{p_1}(t)=\int_0^t\|\nabla v\|_{\dot{B}_{p_1, 1}^{p_1}(\mathbb{R}^d)} \mathrm{d} s$.
\end{lemma}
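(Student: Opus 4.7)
My plan is to follow the Lagrangian decomposition \eqref{new} and reduce the whole matter to producing a sequence of smooth initial data with $\|u_0^n\|_{\B^{d/p-1}_{p,r}}+\|\tau_0^n\|_{\B^{d/p}_{p,r}}\to 0$ yet $\|P_0^n\|_{\B^{d/p}_{p,r}}\to\infty$, where $P_0^n:=\omega_0^n\tau_0^n-\tau_0^n\omega_0^n$. Setting $T_n:=c_0/\|P_0^n\|_{\B^{d/p}_{p,r}}$ will then make $T_n\to 0$ and force the linear piece $tP_0^n$ in \eqref{new} to have $\B^{d/p}_{p,r}$ norm $\approx c_0$ at $t=T_n$; since the data will be Schwartz, local well-posedness of $(u^n,\tau^n)$ on a definite time window is standard, so the whole argument reduces to bounding the two integral remainders in \eqref{new} by $o(c_0)$ and transferring the estimate back to $\tau^n(T_n,\cdot)$ via the flow.

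The construction hinges on the failure of $\B^{d/p}_{p,r}$ to be an algebra when $r>1$: two functions of small Besov norm can have a product of large Besov norm if their frequency supports are tuned to push mass onto a distant dyadic block. Fix a Schwartz $\phi$ with $\widehat\phi$ supported in a tiny ball at the origin, a unit vector $V\perp e_1$, and a symmetric constant matrix $M$ that fails to commute with the skew part of $V\otimes e_1$. In 2D take the stream-function ansatz
\bbal
u_0^n(x)&:=\delta_n\,2^{-N_n d/p}\,\bigl(\partial_2,-\partial_1\bigr)\!\bigl(\phi(x)\cos(2^{N_n}x_1)\bigr),\\
\tau_0^n(x)&:=\eta_n\,2^{-n d/p}\,\phi(x)\cos(2^n x_1)\,M,
\end{align*}
and in higher dimensions replace this by the Leray projection of an analogous anisotropic profile. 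Bernstein's lemma (Lemma \ref{lem2.1}) gives $\|u_0^n\|_{\B^{d/p-1}_{p,r}}\sim\delta_n$, $\|\tau_0^n\|_{\B^{d/p}_{p,r}}\sim\eta_n$ and $\|\nabla u_0^n\|_{L^\infty}\sim\delta_n 2^{N_n(2-d/p)}$. For $N_n>n$ the commutator $P_0^n$ is spectrally concentrated in the annulus $|\xi|\sim 2^{N_n}$ with $L^p$-amplitude $\sim\delta_n\eta_n\,2^{N_n(2-d/p)-nd/p}$, hence $\|P_0^n\|_{\B^{d/p}_{p,r}}\gtrsim\delta_n\eta_n\,2^{2N_n-nd/p}$. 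Choosing $\delta_n=\eta_n=1/n$ and $N_n:=n(\lceil d/p\rceil+1)$ ensures $2N_n-nd/p\to+\infty$ uniformly over $p\in[1,\infty]$, while both initial-data norms vanish.

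To control the remainders $I_1=\int_0^t(P-P_0)(s,\phi)\,\dd s$ and $I_2=\int_0^t\bigl(P_0(\phi)-P_0\bigr)\,\dd s$, I will exploit that both integrands vanish at $s=0$. For $I_1$, write $P-P_0=(\omega-\omega_0)\tau+\omega_0(\tau-\tau_0)-\text{transposed terms}$; Lemma \ref{rg-t} applied to the $u$-equation together with a direct Duhamel on the $\tau$-equation gives $\|\omega(s)-\omega_0\|_{\B^{d/p-1}_{p,r}}+\|\tau(s)-\tau_0\|_{\B^{d/p}_{p,r}}\lesssim s\|P_0^n\|_{\B^{d/p}_{p,r}}\cdot(1+\text{small})$, and Lemma \ref{ps} then delivers $\|I_1^n\|_{\B^{d/p}_{p,r}}\lesssim T_n^2\|P_0^n\|_{\B^{d/p}_{p,r}}=O(T_n)$. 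For $I_2$, use $P_0\circ\phi(s)-P_0=\int_0^s(u\cdot\nabla P_0)\circ\phi(\sigma)\,\dd\sigma$ and observe that the flow $\phi(t,\cdot)$ is bi-Lipschitz uniformly on $[0,T_n]$ (since $\|\nabla u^n\|_{L^1_{T_n}(L^\infty)}$ is small via Lemma \ref{rg-t}); composition by such a diffeomorphism preserves $\B^{d/p}_{p,r}$ up to a harmless constant, yielding the same $O(T_n)$ bound. Pulling \eqref{new} back by $\phi(T_n,\cdot)^{-1}$ and applying the composition estimate once more delivers $\|\tau^n(T_n,\cdot)\|_{\B^{d/p}_{p,r}}\geq c_0-o(1)$.

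The hardest part will be carrying out the composition estimate $\|f\circ\phi\|_{\B^{d/p}_{p,r}}\approx\|f\|_{\B^{d/p}_{p,r}}$ and the companion bilinear bounds at the critical index when $r>1$, since neither is delivered by Lemma \ref{ps1} (an $r=1$ statement). The intended workaround is to exploit the sharp frequency localization of $(u_0^n,\tau_0^n)$ to bypass the missing algebra property by a direct dyadic decomposition on the single dominant block, and to carry all time-dependent estimates in the Chemin–Lerner scale $\LL^\rho_{T_n}(\B^s_{p,r})$ recalled in Section \ref{sec2}; keeping the constants in these bounds uniform in $n$ as $N_n\to\infty$ is where the bulk of the technical labor will sit.
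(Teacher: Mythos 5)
Your proposal does not address the statement you were asked to prove. The statement is Lemma \ref{rg-t}: the a priori estimate
$\|u\|_{\LL_T^{\infty}(\dot{B}_{p,1}^s)} \leq e^{CV_{p_1}(t)}\bigl(\|u_0\|_{\dot{B}_{p,1}^s}+\int_0^t e^{-CV_{p_1}(\tau)}\|g(\tau)\|_{\dot{B}_{p,1}^s}\,\dd\tau\bigr)$
for solutions of the linear transport equation $\partial_t u+v\cdot\nabla u=g$, under the stated restrictions on $s$, $p$, $p_1$. What you have written instead is a strategy for the main ill-posedness result, Theorem \ref{th3}: construction of oscillatory initial data, the Lagrangian decomposition \eqref{new}, the lower bound on the commutator $P_0=\omega_0\tau_0-\tau_0\omega_0$, and the control of the two remainder integrals. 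None of that establishes, or even bears on, the transport estimate. Indeed your own argument \emph{invokes} Lemma \ref{rg-t} twice (to bound $\|\omega(s)-\omega_0\|$ and to control $\|\nabla u^n\|_{L^1_{T_n}(L^\infty)}$), so it cannot serve as a proof of it without circularity.

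For the record, the paper does not prove this lemma either; it is quoted from Bahouri--Chemin--Danchin \cite{B}. The standard proof applies $\dot{\Delta}_j$ to the equation to get $\partial_t\dot{\Delta}_j u+v\cdot\nabla\dot{\Delta}_j u=\dot{\Delta}_j g+[v\cdot\nabla,\dot{\Delta}_j]u$, performs an $L^p$ energy estimate along the flow on each block, and then uses the commutator estimate $\sum_j 2^{js}\|[v\cdot\nabla,\dot{\Delta}_j]u\|_{L^p}\lesssim \|\nabla v\|_{\dot{B}^{d/p_1}_{p_1,1}}\|u\|_{\dot{B}^s_{p,1}}$ --- which is precisely where the hypothesis $s\in\bigl(-d\min\{\tfrac{1}{p_1},1-\tfrac1p\},1+\tfrac{d}{p_1}\bigr]$ enters --- followed by summation over $j$ and Gronwall's inequality. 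If you intend your text as an outline of Theorem \ref{th3}, it is broadly aligned with the paper's strategy (though the paper uses a logarithmically divergent sum of blocks for $\tau_0$ rather than a widely separated pair of single blocks, precisely so that $\|\tau_0^n\|_{\dot{B}^{d/p}_{p,r}}\to0$ for $r>1$ while $\|P_0^n\|_{L^\infty}$ stays large); but as a proof of Lemma \ref{rg-t} it is simply the wrong object.
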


\begin{lemma}[see \cite{B}]\label{rg-h} Let $s \in \mathbb{R}, 1 \leq p, r, \rho_1 \leq \infty$. For some positive time $T$ (possibly $T=\infty$ ), then the heat equation
$$
\begin{cases}
\partial_t u-\nu \Delta u=f\in \tilde{L}^{\rho_1}\left(0, T ; \dot{B}_{p, r}^{s-2+2 / \rho_1}(\mathbb{R}^d)\right),  \\
u(t=0)=u_0\in \dot{B}_{p, r}^s(\mathbb{R}^d),
\end{cases}
$$
admits a unique strong solution in $\tilde{L}^{\infty}_T\left(\dot{B}_{p, r}^s\right) \cap \tilde{L}^{\rho_1}_T\left(\dot{B}_{p, r}^{s-2+2 / \rho_1}\right)$. Moreover, there exists a constant $C$ depending just on the dimension $d$ such that the following estimate holds true for any time $t \in[0, T]$:
$$
\nu^{\frac{1}{\rho}}\|u\|_{\tilde{L}^\rho_t(\dot{B}_{p, r}^{s+\frac{2}{\rho}})} \leq C\left(\left\|u_0\right\|_{\dot{B}_{p, r}^s}+\nu^{\frac{1}{\rho_1}-1}\|f\|_{\tilde{L}^{\rho_1}_t(\dot{B}_{p, r}^{s-2+\frac{2}{\rho_1}})}\right),\quad \rho \geq \rho_1.
$$
\end{lemma}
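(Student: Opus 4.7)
The plan is to construct a sequence of small-amplitude high-frequency initial data $(u_0^n,\tau_0^n)$ whose commutator $P_0^n := \omega_0^n\tau_0^n - \tau_0^n\omega_0^n$ satisfies $\|P_0^n\|_{\B^{d/p}_{p,r}} \to +\infty$, and then to exploit the Lagrangian identity \eqref{new} on a vanishing time interval $[0,T_n]$ with $T_n\|P_0^n\|_{\B^{d/p}_{p,r}}\geq 2c_0$ to transfer the blow-up of $P_0^n$ to a lower bound for $\tau^n(T_n,\cdot)$. The underlying mechanism is the failure of the algebra property $\B^{d/p-1}_{p,r}\cdot\B^{d/p}_{p,r}\hookrightarrow\B^{d/p-1}_{p,r}$ when $r>1$ (which is precisely what makes Theorem \ref{lw} rely on $r=1$ via Lemma \ref{ps1}): a product of two functions with small Besov norms can have a large Besov norm, and this is what I want to happen for $P_0^n$.

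For the construction, fix a Schwartz envelope $\Phi$ with $\widehat{\Phi}$ supported in a thin annulus about $|\xi|=1$, a lacunary integer sequence $k_1<k_2<\cdots<k_{N_n}$ with large gaps, unit vectors $\vec U_j\perp\vec e_1$, and traceless symmetric matrices $M_j$ chosen so that the matrix commutator $[\vec e_1\otimes\vec U_j - \vec U_j\otimes\vec e_1,\; M_j]\neq 0$. Set
\begin{align*}
u_0^n(x) &= \alpha_n \sum_{j=1}^{N_n} 2^{-k_j(d/p-1)}\, \Phi(x)\vec U_j\cos(2^{k_j}x_1),\\
\tau_0^n(x) &= \alpha_n \sum_{j=1}^{N_n} 2^{-k_j d/p}\, \Phi(x)M_j\sin(2^{k_j}x_1),
\end{align*}
and apply a Leray projector to $u_0^n$ (a negligible correction in all relevant norms). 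Disjointness of dyadic Fourier supports gives $\|u_0^n\|_{\B^{d/p-1}_{p,r}}+\|\tau_0^n\|_{\B^{d/p}_{p,r}}\lesssim \alpha_n N_n^{1/r}$. Expanding $\omega_0^n\tau_0^n-\tau_0^n\omega_0^n$ and using $\sin^2\theta=\tfrac12-\tfrac12\cos(2\theta)$, each diagonal $(j,j)$-block contributes a term of amplitude $\sim\alpha_n^2\,2^{k_j(2-2d/p)}$ localized at frequency $2^{k_j+1}$, so almost-orthogonality yields
\[\|P_0^n\|_{\B^{d/p}_{p,r}} \;\gtrsim\; \alpha_n^2\Big(\sum_{j=1}^{N_n} 2^{r\,k_j(2-d/p)}\Big)^{1/r}.\]
A case-by-case tuning of $\alpha_n$, $N_n$ and the placement of $\{k_j\}$ (geometric spacing of the $k_j$ for $p\neq d/2$, arithmetic for $p=d/2$, with standard modifications at the endpoints $p=1,\infty$) then produces $\alpha_n N_n^{1/r}\to 0$ while $\|P_0^n\|_{\B^{d/p}_{p,r}}\to+\infty$.

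Given such data, classical theory yields a smooth solution $(u^n,\tau^n)$ on a maximal interval $[0,T_n^\ast)$, and \eqref{new} reads
\[\tau^n(T_n,\phi^n(T_n,x)) - \tau_0^n(x) - T_n P_0^n(x) = I_1^n(x) + I_2^n(x),\]
with $I_1^n := \int_0^{T_n}\bigl(P^n-P_0^n\bigr)\circ\phi^n\,\dd s$ and $I_2^n := \int_0^{T_n}\bigl(P_0^n\circ\phi^n - P_0^n\bigr)\,\dd s$. The first is controlled in $\B^{d/p}_{p,r}$ by writing $P^n-P_0^n$ as a difference of quadratic expressions in $(u^n,\tau^n)$ and using Lemma \ref{ps} together with the heat and transport estimates of Lemmas \ref{rg-h}, \ref{rg-t}, giving $\|I_1^n\|_{\B^{d/p}_{p,r}}=o(T_n\|P_0^n\|_{\B^{d/p}_{p,r}})$. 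The second is a flow-composition defect, bounded by $T_n\,\|\phi^n(s,\cdot)-\mathrm{Id}\|_{\mathrm{Lip}}\cdot\|\n P_0^n\|_{\B^{d/p-1}_{p,r}}$, which vanishes as $T_n\to 0$ since $\|\n u^n\|_{L^1_{T_n}(L^\infty)}\to 0$ by Lemma \ref{rg-h} and smallness of $u_0^n$. Choosing $T_n\to 0$ with $T_n\|P_0^n\|_{\B^{d/p}_{p,r}}\ge 2c_0$ and invoking a standard right-composition estimate in $\B^{d/p}_{p,r}$ (valid because $\phi^n$ is a small Lipschitz perturbation of identity) transfers the lower bound from $\tau^n\circ\phi^n$ to $\|\tau^n(T_n,\cdot)\|_{\B^{d/p}_{p,r}}\ge c_0$.

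The main obstacle will be the construction in the second paragraph: calibrating $(\alpha_n,N_n,\{k_j\})$ uniformly across the whole range $1\le p\le\infty$, $1<r\le\infty$ so that the commutator norm blows up while the data norms vanish — in particular handling the sign of $(2-d/p)$ and the endpoint $p=\infty$. Once the data are in hand, the propagation arguments of the third paragraph reduce to routine applications of Lemmas \ref{ps}, \ref{ps1}, \ref{rg-t}, \ref{rg-h} together with a short-time flow-composition estimate.
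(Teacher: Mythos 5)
Your proposal does not address the statement you were asked to prove. The statement is Lemma \ref{rg-h}: the classical maximal-regularity estimate for the \emph{linear} heat equation $\partial_t u-\nu\Delta u=f$ in homogeneous Besov spaces (a result quoted from Bahouri--Chemin--Danchin). What you have written instead is a sketch of the proof of the \emph{main ill-posedness theorem} (Theorem \ref{th3}): construction of high-frequency initial data with a large commutator $\omega_0\tau_0-\tau_0\omega_0$, the Lagrangian decomposition \eqref{new}, and the transfer of the lower bound to $\tau^n(T_n,\cdot)$. None of that has any bearing on the heat lemma; indeed your argument \emph{invokes} Lemma \ref{rg-h} as a tool, so as a proof of that lemma it would be circular even if it were on topic.

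For the record, the proof of Lemma \ref{rg-h} is a purely linear, frequency-localized argument. Apply $\dot{\Delta}_j$ to the equation and write Duhamel's formula
\begin{equation*}
\dot{\Delta}_j u(t)=e^{\nu t\Delta}\dot{\Delta}_j u_0+\int_0^t e^{\nu(t-s)\Delta}\dot{\Delta}_j f(s)\,\dd s .
\end{equation*}
The key ingredient is the smoothing estimate for data spectrally supported in the annulus $2^j\mathcal{C}$, namely $\|e^{\nu t\Delta}\dot{\Delta}_j g\|_{L^p}\leq C e^{-c\nu t2^{2j}}\|\dot{\Delta}_j g\|_{L^p}$, obtained by bounding the $L^1$ norm of the kernel $\mathcal{F}^{-1}\big(\widetilde\varphi(2^{-j}\xi)e^{-\nu t|\xi|^2}\big)$. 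Taking the $L^\rho$ norm in time and using $\|e^{-c\nu t2^{2j}}\|_{L^\rho_t}\lesssim(\nu 2^{2j})^{-1/\rho}$ for the free part, and Young's convolution inequality in $t$ (with exponents $1+\frac1\rho=\frac1{\rho_2}+\frac1{\rho_1}$, $\rho\geq\rho_1$) for the Duhamel part, yields
\begin{equation*}
\nu^{\frac1\rho}2^{\frac{2j}{\rho}}\|\dot{\Delta}_j u\|_{L^\rho_t(L^p)}\leq C\Big(\|\dot{\Delta}_j u_0\|_{L^p}+\nu^{\frac1{\rho_1}-1}2^{2j(\frac1{\rho_1}-1)}\|\dot{\Delta}_j f\|_{L^{\rho_1}_t(L^p)}\Big).
\end{equation*}
Multiplying by $2^{js}$ and taking the $\ell^r(\mathbb{Z})$ norm gives exactly the stated inequality in the tilde spaces $\tilde L^\rho_t(\dot B^{s+2/\rho}_{p,r})$; uniqueness follows since the difference of two solutions solves the free heat equation with zero data. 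You should supply an argument of this type rather than the nonlinear construction you outlined, which belongs to Section \ref{sec3} of the paper and not to this lemma.
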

\begin{remark} Due to Lemma \ref{rg-h}, and using the fact that the projector $\mathcal{P}$ on the free divergence vector fields is continuous from $\dot{B}_{p, r}^s$ to itself since it is a homogeneous Fourier multiplier of degree 0, we can easily solve the non-stationary Stokes problem
$$
\begin{cases}\partial_t u-\nu \Delta u+\nabla \mathrm{p}=f, & {[0, T) \times \mathbb{R}^{d},} \\
\operatorname{div} u=0, & {[0, T) \times \mathbb{R}^{d},} \\
u(t=0)=u_0, & \mathbb{R}^{d}.\end{cases}
$$
Assume that $u_0 \in \dot{B}_{p, r}^s$ and $f\in\tilde{L}^1\left(0, T ; \dot{B}_{p, r}^s\right)$, then there exists a unique solution $(u,\mathrm{p})$ satisfying
$$
u \in \tilde{L}^{\infty}\left(0, T ; \dot{B}_{p, r}^s\right) \cap \tilde{L}^1\left(0, T ; \dot{B}_{p, r}^{s+2}\right), \quad \nabla \mathrm{p} \in \tilde{L}^1\left(0, T ; \dot{B}_{p, r}^s\right).
$$
Furthermore, it holds
$$
\nu^{\frac{1}{\rho}}\|u\|_{\tilde{L}^\rho(0, T ; \dot{B}_{p, r}^{s+\frac{2}{\rho}})} \leq C\left(\left\|u_0\right\|_{\dot{B}_{p, r}^s}+\|\mathcal{P} f\|_{L^1\left(0, T ; \dot{B}_{p, r}^s\right)}\right) .
$$
\end{remark}

\begin{lemma}[see \cite{B}]\label{fh1} Assume that $u$ is a smooth vector field and $\phi(t, x)$ satisfies \eqref{n}.
Then, for all $t \in \mathbb{R}^{+}$, the flow $\phi(t, x)$ is a $C^1$-diffeomorphism over $\mathbb{R}^d$, and we have
\begin{align*}
&\left\|\phi(t,x)-x\right\|_{L_x^{\infty}} \leq\int_0^t\|u(s)\|_{L_x^{\infty}} \mathrm{d} s,\\
&\left\|\nabla\phi^{ \pm}(t)\right\|_{L_x^{\infty}} \leq \exp \left(\int_0^t\|\nabla u(s)\|_{L_x^{\infty}} \mathrm{d} s\right) .
\end{align*}
\end{lemma}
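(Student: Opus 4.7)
My plan is to realize the heuristic decomposition \eqref{new} by producing a sequence of initial data whose Besov norms vanish, while the leading linear-in-time term $tP_0^n(x)$ produces a unit contribution to $\tau^n(T_n)$ at a small time $T_n\to 0$ and the two integral remainders in \eqref{new} are negligible. After taking the $\B^{\frac{d}{p}}_{p,r}$ norm in \eqref{new} and changing variables through the Lagrangian flow $\phi^n$ (a near-identity $C^1$-diffeomorphism by Lemma~\ref{fh1}), the entire argument reduces to a quantitative lower bound on $T_n\|P_0^n\|_{\B^{\frac{d}{p}}_{p,r}}$ paired with upper bounds on the two remainder integrals.

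\textbf{Construction of data and local existence.}
I construct Schwartz data $(u_0^n,\tau_0^n)$, with $u_0^n$ divergence-free and both pieces Fourier-supported in a band of roughly $n$ dyadic frequencies around a large center $2^{k_n}$, where the wavevectors inside each annulus are slightly detuned between $u_0^n$ and $\tau_0^n$ so that each bilinear product in $\omega_0^n\tau_0^n$ and $\tau_0^n\omega_0^n$ generates, by high-high-to-low beating, a persistent low-frequency contribution. After calibrating by a scalar prefactor $\delta_n\to 0$ tuned against the $\ell^r$-summation over the $n$ active scales, the construction will yield
$$
\|u_0^n\|_{\B^{\frac{d}{p}-1}_{p,r}}+\|\tau_0^n\|_{\B^{\frac{d}{p}}_{p,r}}\longrightarrow 0,\qquad \|P_0^n\|_{\B^{\frac{d}{p}}_{p,r}}\geq c,
$$
the second bound being possible because the algebra property of $\B^{\frac{d}{p}}_{p,r}$ fails at $r>1$, so many coherent small beats accumulate even though each factor has small Besov norm. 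Since the data are Schwartz, classical parabolic theory (Stokes plus transport, cf.\ Lemmas~\ref{rg-h} and \ref{rg-t}) supplies a smooth solution $(u^n,\tau^n)$ on some $[0,T^*_n)$; I then choose $T_n\in(0,T^*_n)$ with $T_n\to 0$ and $T_n\|P_0^n\|_{\B^{\frac{d}{p}}_{p,r}}\geq 2c_0$.

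\textbf{Error estimates and conclusion.}
By Lemma~\ref{fh1}, the smallness of $\int_0^{T_n}(\|u^n(s)\|_{L^\infty_x}+\|\na u^n(s)\|_{L^\infty_x})\dd s$ forces $\phi^n$ to be $C^1$-close to the identity, so composition with $\phi^n$ (and its inverse) acts as an almost-isometry on $\B^{\frac{d}{p}}_{p,r}$ for the smooth functions at hand. It then remains to bound, in $\B^{\frac{d}{p}}_{p,r}$,
\bbal
I_n&=\int_0^{T_n}\bi(P^n(s,\phi^n(s,x))-P_0^n(\phi^n(s,x))\bi)\dd s,\\
J_n&=\int_0^{T_n}\bi(P_0^n(\phi^n(s,x))-P_0^n(x)\bi)\dd s.
\end{align*}
For $I_n$, I write $P^n(s)-P_0^n=\int_0^s\pa_t(\omega^n\tau^n-\tau^n\omega^n)\dd t'$, substitute the $u$- and $\tau$-equations to replace time derivatives by spatial quantities, and close the estimate via Lemma~\ref{rg-h} for $u^n$, Lemma~\ref{rg-t} for $\tau^n$, and the bilinear rule of Lemma~\ref{ps}, obtaining $\|I_n\|_{\B^{\frac{d}{p}}_{p,r}}\lesssim T_n^2\Phi_n$ with $\Phi_n$ polynomial in the initial-data norms and one auxiliary higher-regularity norm kept bounded by the spectral design of $(u_0^n,\tau_0^n)$. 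For $J_n$, I interpolate the $L^p$ displacement bound coming from Lemma~\ref{fh1} against a bound on $P_0^n$ at one spatial derivative more, tuning $\delta_n$ and $k_n$ so that $\|J_n\|_{\B^{\frac{d}{p}}_{p,r}}\to 0$. These estimates together yield
$$
\|\tau^n(T_n)\|_{\B^{\frac{d}{p}}_{p,r}}\geq T_n\|P_0^n\|_{\B^{\frac{d}{p}}_{p,r}}-\|\tau_0^n\|_{\B^{\frac{d}{p}}_{p,r}}-\|I_n\|_{\B^{\frac{d}{p}}_{p,r}}-\|J_n\|_{\B^{\frac{d}{p}}_{p,r}}\geq c_0.
$$

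\textbf{Main obstacle.}
The genuine difficulty is engineering the spectral structure of $(u_0^n,\tau_0^n)$ so that three competing quantities are controlled simultaneously: the data norms vanish in $\B^{\frac{d}{p}-1}_{p,r}\times\B^{\frac{d}{p}}_{p,r}$; the commutator $\omega_0^n\tau_0^n-\tau_0^n\omega_0^n$ stays bounded below in $\B^{\frac{d}{p}}_{p,r}$ (forcing one to exploit both the failure of the algebra property at $r>1$ and the high-high-to-low resonance); and the one-derivative-higher norm of $P_0^n$ appearing in the estimate of $J_n$ does not grow faster than $T_n^{-1}$. Reconciling the first two requirements---in particular verifying that the cross-scale beat terms in $P_0^n$ accumulate rather than cancel---is where the construction must be genuinely explicit, and it is exactly this mechanism that is unavailable at the endpoint $r=1$ covered by Theorem~\ref{lw}.
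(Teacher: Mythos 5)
Your proposal does not address the statement at hand. The statement is Lemma~\ref{fh1}, the elementary quantitative properties of the flow $\phi$ generated by a smooth vector field $u$ via \eqref{ode}--\eqref{n}: that $\phi(t,\cdot)$ is a $C^1$ diffeomorphism, that $\|\phi(t,x)-x\|_{L^\infty_x}\leq\int_0^t\|u(s)\|_{L^\infty_x}\,\dd s$, and that $\|\nabla\phi^{\pm}(t)\|_{L^\infty_x}\leq\exp\big(\int_0^t\|\nabla u(s)\|_{L^\infty_x}\,\dd s\big)$. What you have written instead is a strategy for the main ill-posedness theorem (Theorem~\ref{th3}): construction of oscillatory initial data, the decomposition \eqref{new}, lower bounds on $T_n\|P_0^n\|_{\B^{d/p}_{p,r}}$, and estimates of the two remainder integrals. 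None of that establishes, or even bears on, the two displayed inequalities of the lemma; indeed your argument invokes Lemma~\ref{fh1} as an ingredient rather than proving it. In the paper this lemma carries no proof at all --- it is quoted from the reference \cite{B} as a classical consequence of the Cauchy--Lipschitz theory.

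For the record, the actual proof is short. Existence, uniqueness and smooth dependence on $x$ of $\phi$ follow from the Cauchy--Lipschitz theorem since $u$ is smooth; invertibility of $\phi(t,\cdot)$ comes from solving the same ODE backward in time, so $\phi(t,\cdot)$ is a $C^1$ diffeomorphism. The first inequality is immediate from \eqref{n}:
\begin{align*}
|\phi(t,x)-x|\leq\int_0^t|u(s,\phi(s,x))|\,\dd s\leq\int_0^t\|u(s)\|_{L^\infty_x}\,\dd s.
\end{align*}
For the second, differentiate \eqref{n} in $x$ to get $\nabla\phi(t,x)=\mathrm{Id}+\int_0^t\nabla u(s,\phi(s,x))\,\nabla\phi(s,x)\,\dd s$, hence
\begin{align*}
\|\nabla\phi(t)\|_{L^\infty_x}\leq 1+\int_0^t\|\nabla u(s)\|_{L^\infty_x}\,\|\nabla\phi(s)\|_{L^\infty_x}\,\dd s,
\end{align*}
and Gronwall's inequality yields the exponential bound; the inverse flow $\phi^{-}$ satisfies the analogous backward integral equation, giving the same bound. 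You should supply this argument (or simply the citation), and reserve the material you wrote for the proof of Theorem~\ref{th3}, where it belongs.
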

\begin{lemma}[see \cite{CNY}]\label{fh2} Let $s \in(-1,1)$ and $(p, q) \in[1, \infty]^2 $. Let $u \in \mathcal{S}(\mathbb{R}^d)$ with $\operatorname{div} u=0$.  Then the flow $\phi$ which is defined by $u$ in \eqref{n} and its inverse $\phi^{-}$ are $C^1$ measure-preserving global diffeomorphism over $\mathbb{R}^d$. There holds that
$$
\|u \circ \phi\|_{\dot{B}_{p, q}^s} \leq C \exp \left(\int_0^t\|\nabla u(s)\|_{L_x^{\infty}} \mathrm{d} s\right)\|u\|_{\dot{B}_{p, q}^s}.
$$
\end{lemma}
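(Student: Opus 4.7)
The plan is to treat the composition operator $T_\phi : f \mapsto f\circ\phi$ as a linear map and to obtain the Besov bound from two endpoint estimates by real interpolation, with negative indices recovered by duality. Since $u\in\mathcal{S}(\mathbb{R}^d)$ and $\operatorname{div} u=0$, Cauchy--Lipschitz produces a global $C^1$-diffeomorphism $\phi(t,\cdot)$, and the Liouville identity $\partial_t(\det\nabla\phi)=(\operatorname{div} u)\circ\phi\cdot\det\nabla\phi$ together with $\det\nabla\phi(0,\cdot)\equiv 1$ forces $\det\nabla\phi(t,\cdot)\equiv 1$, so that both $\phi(t,\cdot)$ and $\phi^{-1}(t,\cdot)$ are measure-preserving. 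Throughout I write $\Lambda(t):=\exp\bigl(\int_0^t\|\nabla u(s)\|_{L^{\infty}_x}\,\mathrm{d}s\bigr)$, which by Lemma \ref{fh1} controls both $\|\nabla\phi^{\pm}(t,\cdot)\|_{L^{\infty}_x}$.

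The two endpoints are set up as follows. The change of variable with unit Jacobian gives $\|T_\phi f\|_{L^p}=\|f\|_{L^p}$ for every $p\in[1,\infty]$, so $T_\phi$ is an isometry on $L^p$. The chain rule $\nabla(f\circ\phi)=(\nabla f)\circ\phi\,\nabla\phi$ together with measure preservation yields
$$\|\nabla(f\circ\phi)\|_{L^p}\leq \|\nabla\phi\|_{L^{\infty}_x}\,\|(\nabla f)\circ\phi\|_{L^p}=\|\nabla\phi\|_{L^{\infty}_x}\|\nabla f\|_{L^p}\leq \Lambda(t)\|f\|_{\dot{W}^{1,p}},$$
so $T_\phi$ is bounded on $\dot{W}^{1,p}$ with operator norm at most $\Lambda(t)$. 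Using the standard real-interpolation identification $\dot{B}^s_{p,q}=(L^p,\dot{W}^{1,p})_{s,q}$ for $s\in(0,1)$, the interpolation theorem produces $\|T_\phi\|_{\dot{B}^s_{p,q}\to\dot{B}^s_{p,q}}\leq 1^{1-s}\Lambda(t)^s\leq \Lambda(t)$, which is the asserted inequality for positive $s$.

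For $s\in(-1,0)$, set $\sigma:=-s\in(0,1)$ and argue by duality: the identity $\langle f\circ\phi,g\rangle=\langle f,g\circ\phi^{-1}\rangle$ (valid because $\phi$ is measure-preserving) together with the homogeneous Besov duality $\bigl(\dot{B}^{\sigma}_{p',q'}\bigr)^{*}=\dot{B}^{-\sigma}_{p,q}$ reduces the estimate for $T_\phi$ on $\dot{B}^{s}_{p,q}$ to the positive-index estimate just proved, applied to $T_{\phi^{-1}}$ on $\dot{B}^{\sigma}_{p',q'}$; this is in place because $\phi^{-1}$ enjoys the same Lipschitz bound $\Lambda(t)$. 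The remaining case $s=0$ follows from one more real interpolation between any pair $s_1<0<s_2$ already treated. The main obstacle is the careful bookkeeping of real interpolation and duality in the \emph{homogeneous} Besov scale: one must work in the subspace $\mathcal{S}'_h$ introduced in Section \ref{sec2} (or modulo polynomials) and check that $T_\phi$ descends to this quotient. This is handled by exploiting that $u\in\mathcal{S}$, so $u\circ\phi$ and $g\circ\phi^{-1}$ are genuine Schwartz-class objects and the classical interpolation and duality statements in the Besov scale (as developed in \cite{B}) apply verbatim, with a constant $C$ depending only on $s,p,q,d$.
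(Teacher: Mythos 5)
The paper does not prove this lemma at all: it is quoted verbatim from \cite{CNY}, so there is no internal proof to compare against. Your argument is, as far as I can tell, correct in substance, and it takes a genuinely different route from the one used in that reference. The standard proof there is a direct Littlewood--Paley computation: decompose $f=\sum_k\dot\Delta_k f$, bound $\|\dot\Delta_j((\dot\Delta_k f)\circ\phi)\|_{L^p}$ by $C\,2^{k-j}\|\nabla\phi\|_{L^\infty}\|\dot\Delta_k f\|_{L^p}$ for $k\le j$ (Bernstein plus the chain rule) and by $C\,2^{j-k}\|\nabla\phi^{-1}\|_{L^\infty}\|\dot\Delta_k f\|_{L^p}$ for $k>j$ (writing $\dot\Delta_k f$ in divergence form and integrating by parts, using measure preservation), and then sum; the window $s\in(-1,1)$ is exactly the range where both geometric series close. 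Your interpolation-plus-duality scheme reaches the same conclusion with less computation: the two endpoint bounds on $L^p$ and $\dot W^{1,p}$ are one-line facts, the range $s\in(0,1)$ drops out of the real-interpolation identity $(L^p,\dot W^{1,p})_{s,q}=\dot B^s_{p,q}$, and $s\in(-1,0)$ follows by transposition since $\phi^{-1}$ obeys the same Lipschitz bound. What the direct proof buys is robustness at the index endpoints and a transparent explanation of the restriction $s\in(-1,1)$; what yours buys is brevity and the clean interpretation of the exponent ($\Lambda(t)^{|s|}$ rather than $\Lambda(t)$, which is actually a slightly sharper constant).

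One step deserves more care than you give it. The duality identity $\bigl(\dot B^{\sigma}_{p',q'}\bigr)^{*}=\dot B^{-\sigma}_{p,q}$ is only literally true when $p',q'<\infty$, i.e.\ when $p,q>1$; the lemma is claimed for all $(p,q)\in[1,\infty]^2$, and the cases $p=1$ or $q=1$ (hence $p'=\infty$ or $q'=\infty$) fall outside that statement. The fix is standard but should be cited: one uses instead the two-sided ``norm by duality'' characterization
$\|f\|_{\dot B^{s}_{p,q}}\le C\sup\bigl\{\langle f,g\rangle:\ g\in\mathcal S,\ \|g\|_{\dot B^{-s}_{p',q'}}\le1\bigr\}$,
which holds for \emph{all} $1\le p,q\le\infty$ (see \cite{B}, the duality proposition for homogeneous Besov spaces), rather than an identification of dual spaces. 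With that substitution your transposition argument goes through unchanged, since $\langle f\circ\phi,g\rangle=\langle f,g\circ\phi^{-1}\rangle$ and the positive-index bound applies to $g\circ\phi^{-1}$. The remaining points you flag (Liouville for measure preservation, working inside $\mathcal S'_h$, the $s=0$ case by a second interpolation) are handled correctly.
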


\section{Proof of Theorem \ref{th3}}\label{sec3}
In this section, we prove Theorem \ref{th3} by dividing it into several parts:
(1) Construction of initial data;
(2) Estimation of initial data;
(3) Local well-posedness for \eqref{0} with initial data \eqref{u0-de}-\eqref{t0-de};
(4) Discontinuity of the solution map.
\subsection{Construction of initial data}
\quad
We present here a list of the key ingredients used in our construction of initial data.
\begin{itemize}
  \item Before constructing the sequence of initial data, we need to introduce smooth, radial cut-off functions to localize the frequency region. We define an even, real-valued and non-negative function $\widehat{\theta}\in \mathcal{C}^\infty_0(\mathbb{R})$ with values in $[0,1]$ which satisfies
\bbal
\widehat{\theta}(\xi)=
\bca
1, \quad \mathrm{if} \ |\xi|\leq \frac{1}{200 d},\\
0, \quad \mathrm{if} \ |\xi|\geq \frac{1}{100 d}.
\eca
\end{align*}
We assume that $\widehat{\theta}$ is even and real-valued such that $\theta$ is a real-valued function. Then we can define the new real-valued function
$$\phi(x)=\prod_{i=1}^d\theta(x_i)\quad \text{with}\quad \phi(0)=\theta^d(0)>0.$$
\item We write
  $$\Gamma_n:=\frac{1}{\ln\ln n},\quad 1\ll n\in 2\mathbb{N}=\left\{2,4,6,\cdots\right\}.$$
  \item Let $0<\varepsilon \ll 1$ ($\varepsilon$ will be chosen below). We define the matric $A$ whose $(i,j)$-th component $(A)_{ij}$ with $1 \leq i , j \leq d$ is given by
\bbal
&d=2,\quad\left(A\right)_{ij}:=\bca
\varepsilon, \quad  1\leq i=j\leq d,\\
0, \quad else,
\eca\\
&d\geq3,\quad
\left(A\right)_{ij}:=\bca
\varepsilon, \quad  1\leq i=j\leq 2,\\
1, \quad  3\leq i=j\leq d,\\
0, \quad else.
\eca
\end{align*}
  \item We define the vector
  $$\ee:=\frac{\sqrt{2}}{2}(1,1,\underbrace{0,\cdots,0}_{d-2}).$$
\end{itemize}
With the above, we introduce
\bbal
&b_n:=\Gamma_n2^{n}\phi\left(2^{n}Ax\right)
\sin\left(\frac{17}{12}2^{n}\ee\cdot x\right), \\
&c_n:=\mathcal{F}^{-1}\left(\frac{\xi_2-\xi_1}{\xi_2}\widehat{b}_n\right)=b_n-\mathcal{F}^{-1}L,\quad L(\xi):=\frac{\xi_1}{\xi_2}\widehat{b_n}(\xi),\\
&d_n:=\Gamma_n\sum^{n/2}_{j=1}\frac{1}{j}\tilde{b}_j(x),\quad\tilde{b}_j(x):=\phi(2^{j}x)\cos\left(\frac{17}{12}2^{j}\ee\cdot x\right).
\end{align*}
Obviously,  $b_n$ is a real scalar function. A trivial computation gives that
\bal\label{hyy1}
\widehat{b_n}(\xi)=&\frac{\mathrm{i}\Gamma_n2^{n}}{\varepsilon^{2}2^{dn+1}} \f\{\prod_{i=1}^2\widehat{\theta}\left(\frac{\xi_i+\widetilde{\lambda}_n}{\varepsilon2^{n}}\right)
-\prod_{i=1}^2\widehat{\theta}\left(\frac{\xi_i-\widetilde{\lambda}_n}{\varepsilon2^{n}}\right)\g\}\prod_{i=3}^d\widehat{\theta}\left(\frac{\xi_i}{2^{n}}\right),
\end{align}
where $\widetilde{\lambda}_n=\frac{17\sqrt{2}}{24}2^n$.
Obviously, one has
\bbal
&\widehat{b_n}(-\xi_1,-\xi_2,\xi_3,\cdots,\xi_d)=-\widehat{b_n}(\xi_1,\xi_2,\xi_3,\cdots,\xi_d),
\end{align*}
which implies that
\bbal
&L(-\xi_1,-\xi_2,\xi_3,\cdots,\xi_d)=-L(\xi_1,\xi_2,\xi_3,\cdots,\xi_d).
\end{align*}
Hence, we can deduce that $c_n$ is also a real scalar function and
\bal\label{div}
(\pa_1-\pa_2)b_n=-\pa_2c_n.
\end{align}
From \eqref{hyy1}, we also have
\bal\label{hyy2}
&\mathrm{supp} \ \widehat{b_n}(\xi)\subset \bf{S}_1 \cup \bf{S}_2,
\end{align}
where
\bbal
&{\bf{S}_1}:= \left\{\xi\in\R^d: \frac{17\sqrt{2}}{24}2^{n}-\frac{\varepsilon2^n}{100d}\leq \xi_1,\xi_2\leq \frac{17\sqrt{2}}{24}2^{n}+\frac{\varepsilon2^n}{100d},\ |\xi_i|\leq \frac{2^n}{100d},\ 3\leq i\leq d\right\}, \\
&{\bf{S}_2}:= \left\{\xi\in\R^d: -\frac{17\sqrt{2}}{24}2^{n}-\frac{\varepsilon2^n}{100d}\leq \xi_1,\xi_2\leq -\frac{17\sqrt{2}}{24}2^{n}+\frac{\varepsilon2^n}{100d},\ |\xi_i|\leq \frac{2^n}{100d},\ 3\leq i\leq d\right\}.
\end{align*}
In particular, we should emphasize the following important fact
\bbal
&\mathrm{supp} \ \widehat{b_n}(\xi)\subset  \left\{\xi\in\R^d: \ \frac{33}{24}2^{n}\leq |\xi|\leq \frac{35}{24}2^{n}\right\}, \\
&\mathrm{supp} \ \widehat{\tilde{b}_j}(\xi)\subset  \left\{\xi\in\R^d: \ \frac{33}{24}2^{j}\leq |\xi|\leq \frac{35}{24}2^{j}\right\},\quad j\in[1,n/2],
\end{align*}
which gives that
\bbal
&\mathrm{supp} \ \widehat{d_n}(\xi)\subset  \left\{\xi\in\R^d: \ \frac{33}{12}\leq |\xi|\leq \frac{35}{24}2^{\fr{n}{2}}\right\}.
\end{align*}
{\bf Initial Data.} For $1\leq i,j\leq d$, we construct the initial data $u^n_0$ and $\tau^n_0$ whose components are given by
\begin{align}
&(u^n_0)_{i}\equiv\bca
b_n, \quad  &i=1,\\
c_n-b_n, \quad  &i=2,\\
0, \quad &i\geq 3,
\eca\label{u0-de}
\quad\text{and}\\
&(\tau^n_0)_{ij}\equiv\bca
d_n, \quad  &(i,j)\in\{(1,2),(2,1)\},\\
0, \quad &(i,j)\notin\{(1,2),(2,1)\}.
\eca\label{t0-de}
\end{align}
Obviously, from \eqref{div}, one has $\D\ u^n_0=0$. We would like to emphasize that both initial data $u^n_0$ and $\tau^n_0$ are real-valued Schwarz functions.
Next, we need to verify that for large $n$ enough
\begin{itemize}
  \item initial data $u^n_0$ and $\tau^n_0$  is small in $\B^{{d}/{p}-1}_{p,r}\times \B^{{d}/{p}}_{p,r}$ for any $r>1$;
  \item the main contributor $\omega^n_0\tau^n_0-\tau^n_0\omega^n_0$ is large in $L^\infty$.
\end{itemize}

\subsection{Estimation of initial data}
\begin{proposition}\label{pro1}
Let $u^n_0$ and $\tau^n_0$ be defined by \eqref{u0-de}-\eqref{t0-de}. Then for $(p,r)\in[1,\infty]^2$, there exists a positive constant $C=C(\phi)$ independent of $n$ such that for $k\in\{0,1,2,3,4\}$
\bal\label{u0}
&\|u^n_0\|_{\B^{\frac{d}{p}-1+k}_{p,r}}\leq C\ep^{-2}\Gamma_n2^{kn},\\
&\|\tau^n_0\|_{\B^{\frac{d}{p}+k}_{p,r}}\leq C\Gamma_n\left(\sum_{j=1}^{n/2}\frac{1}{j^{r}}\g)^{1/r}2^{kn/2}.
\end{align}
In particular, it holds that
\bbal
\|\tau^n_0\|_{\B^{\frac{d}{p}}_{p,1}}\leq C\Gamma_n\ln n\quad \text{and}\quad\|\tau^n_0\|_{\B^{\frac{d}{p}}_{p,r}}\leq C\Gamma_n\quad\text{for} \;1<r\leq\infty.
\end{align*}
\end{proposition}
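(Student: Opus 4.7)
The plan is to exploit the fact that the explicitly constructed initial data lives in a very restricted portion of frequency space, so that its Besov norms collapse to weighted $L^p$ norms which can be computed by elementary scaling. The structural observation is that each Fourier support listed in the construction, namely those of $\widehat{b_n}$ and of each $\widehat{\tilde{b}_j}$, sits entirely inside the annulus where $\varphi(2^{-n}\cdot)\equiv 1$ (respectively with $n$ replaced by $j$). Consequently $\DDe_n b_n = b_n$, $\DDe_j \tilde{b}_j = \tilde{b}_j$, and every other dyadic block of these functions vanishes. Since $c_n$ is obtained from $b_n$ by a Fourier multiplier supported in the same frequency set, $\DDe_n c_n = c_n$ as well. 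This collapses the Besov norm of $u^n_0$ to $\|u^n_0\|_{\B^s_{p,r}} = 2^{sn}\|u^n_0\|_{L^p}$ and turns the Besov norm of $d_n$ into the disjoint $\ell^r$ sum
\begin{equation*}
\|d_n\|_{\B^{d/p+k}_{p,r}}^r = \Gamma_n^r\sum_{j=1}^{n/2}\frac{1}{j^r}\,2^{j(d/p+k)r}\|\tilde{b}_j\|_{L^p}^r.
\end{equation*}

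To estimate $\|u^n_0\|_{L^p}$, I would first bound $\|b_n\|_{L^p}$ directly via the affine change of variables $y=2^n Ax$, whose Jacobian determinant is $2^{-dn}\varepsilon^{-2}$; this gives $\|\phi(2^n A\cdot)\|_{L^p} = 2^{-dn/p}\varepsilon^{-2/p}\|\phi\|_{L^p}$ and hence $\|b_n\|_{L^p} \leq C\Gamma_n 2^{n-dn/p}\varepsilon^{-2/p}$. For $c_n$, I would invoke relation \eqref{div}, namely $\pa_2 c_n = (\pa_2-\pa_1)b_n$, together with the fact that on the Fourier support of $\widehat{c_n}$ one has $|\xi_2|\geq c 2^n$, which allows one to invert $\pa_2$ via a smooth compactly supported multiplier whose inverse Fourier transform has $L^1$ norm $\lesssim 2^{-n}$. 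Combining with Bernstein's inequality applied to $\nabla b_n$ yields $\|c_n\|_{L^p}\leq C\|b_n\|_{L^p}$, and then the first estimate of the proposition follows after using $\varepsilon^{-2/p}\leq \varepsilon^{-2}$.

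For $\tau^n_0$, the analogous change of variables $y=2^j x$ in each $\tilde{b}_j$ produces $\|\tilde{b}_j\|_{L^p}\leq C 2^{-dj/p}$. Substituting into the disjoint-sum formula above gives
\begin{equation*}
\|d_n\|_{\B^{d/p+k}_{p,r}} \leq C\Gamma_n\Bigl(\sum_{j=1}^{n/2}\frac{2^{jkr}}{j^r}\Bigr)^{1/r} \leq C\Gamma_n\,2^{kn/2}\Bigl(\sum_{j=1}^{n/2}\frac{1}{j^r}\Bigr)^{1/r},
\end{equation*}
which is the second estimate. The two special cases in the last line of the proposition are now immediate: for $k=0$ and $r=1$ the partial harmonic sum is $O(\ln n)$, while for $k=0$ and $r>1$ the tail $\sum_{j\geq 1}j^{-r}$ is finite, giving $\|\tau^n_0\|_{\B^{d/p}_{p,r}}\leq C\Gamma_n$.

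The only genuinely nontrivial ingredient is the bound $\|c_n\|_{L^p}\leq C\|b_n\|_{L^p}$, since $c_n$ is defined via the potentially singular multiplier $1/\xi_2$. The essential point making it go through is that on $\mathrm{supp}\,\widehat{b_n}$ the second coordinate $\xi_2$ is bounded below by $c 2^n$, so $1/\xi_2$ can be localized to a smooth symbol supported in a ball of radius $\sim 2^n$; by rescaling, the associated convolution kernel has $L^1$ norm $\lesssim 2^{-n}$, which precisely cancels the derivative loss produced by Bernstein's inequality on $\nabla b_n$.
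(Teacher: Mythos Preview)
Your proof is correct and follows essentially the same approach as the paper: exploit the single-shell Fourier localization so that Besov norms collapse to weighted $L^p$ norms, then evaluate those by scaling/change of variables. The only noteworthy difference is that you spell out the bound $\|c_n\|_{L^p}\leq C\|b_n\|_{L^p}$ via \eqref{div} and a kernel estimate for $\xi_2^{-1}$, whereas the paper absorbs this step into the equivalence $\|u^n_0\|_{\B^{d/p-1}_{p,r}}\approx \Gamma_n 2^{dn/p}\|\phi(2^nAx)\|_{L^p}$ without further comment; and for $\tau^n_0$ the paper invokes Bernstein's inequality on the ball of radius $\sim 2^{n/2}$ to extract the factor $2^{kn/2}$ before summing, while you keep $2^{jk}$ inside the sum and bound it at the end, which is an equivalent reorganization.
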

\begin{proof} We assume that $(p,r)\in [1,\infty)^2$ without loss of generality.
 Notice that $\mathcal{F}(\dot{\Delta}_jb_n)=\varphi(2^{-j}\cdot)\widehat{b_n}$ for all $j\in \mathbb{Z}$ and
$
\varphi(2^{-j}\xi)\equiv 1$ in $\left\{\xi\in\R^d: \ \frac{4}{3}2^{j}\leq |\xi|\leq \frac{3}{2}2^{j}\right\},
$
then we have
$
\mathcal{F}(\dot{\Delta}_jb_n)=0$ for  $j\neq n,
$
and thus
$
\dot{\Delta}_jb_n=
b_n$ if $j=n$.
Using the definition of Besov space and the fact that $\phi$ is a Schwartz function, yields
$$2^{-kn}\|u^n_0\|_{\B^{\frac{d}{p}-1+k}_{p,r}}\approx\|u^n_0\|_{\B^{\frac{d}{p}-1}_{p,r}}\approx \Gamma_n 2^{\fr{dn}{p}}\|\phi\left(2^{n}Ax\right)\|_{L^p}\leq C\Gamma_n\ep^{-2}.$$
Notice that
$$\widehat{d_n}(\xi)\subset  \left\{\xi\in\R^d: \ |\xi|\leq \frac{35}{24}2^{n/2}\right\},$$ then by Bernstein's inequality, one has
\bbal\|\tau^n_0\|^r_{\B^{\frac{d}{p}+k}_{p,r}}&\leq  C2^{knr/2}\Gamma_n^r\sum_{j=1}^{n/2}\frac{1}{j^{r}}\f\|2^{\fr{d}{p}j}\phi(2^{j}x)\cos\left(\frac{17}{12}2^{j}\ee\cdot x\right)\g\|^r_{L^p}\leq C 2^{knr/2}\Gamma_n^r\sum_{j=1}^{n/2}\frac{1}{j^{r}}.
\end{align*}
This completes the proof of Proposition \ref{pro1}.
\end{proof}
\begin{proposition}\label{pro2}
Let $u^n_0$ and $\tau^n_0$ be defined by \eqref{u0-de}. If $\ep$ is small enough and $n$ is large enough, then there exists $\tilde{c}>0$ independent of $n$ such that
\bbal
\left\|\dot{\Delta}_n\f(\tau^n_0\omega^n_0-\omega^n_0\tau^n_0\g)\right\|_{L^\infty(\R^d)}=\|\tau^n_0\omega^n_0-\omega^n_0\tau^n_0\|_{L^\infty(\R^d)}\geq \tilde{c}2^{2n}\Gamma^2_n\ln n.
\end{align*}
\end{proposition}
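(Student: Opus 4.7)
The plan is to exploit the block structure of $\tau^n_0$, reduce the matrix commutator to a single scalar entry, and evaluate it at $x=0$ where the Schwartz profile $\phi$ attains its maximum and the oscillating factors simplify. Since $\tau^n_0$ has nonzero entries only at $(1,2)$ and $(2,1)$, a direct expansion of $(\tau^n_0\omega^n_0-\omega^n_0\tau^n_0)_{ij}=\sum_k \bigl(\tau_{ik}\omega_{kj}-\omega_{ik}\tau_{kj}\bigr)$ yields, for the $(1,1)$-entry,
$$
\bigl(\tau^n_0\omega^n_0-\omega^n_0\tau^n_0\bigr)_{11}= d_n(\omega^n_0)_{21}-d_n(\omega^n_0)_{12}=-2\,d_n\,(\omega^n_0)_{12}.
$$
Hence it suffices to bound $|d_n(0)|$ and $|(\omega^n_0)_{12}(0)|$ from below.

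For the tensor factor, $\tilde b_j(0)=\phi(0)\cos(0)=\phi(0)>0$, so $d_n(0)=\Gamma_n\phi(0)\sum_{j=1}^{n/2}j^{-1}\geq\tfrac12\Gamma_n\phi(0)\ln n$ for large $n$. For the vorticity factor, note that $\widehat{\theta}$ is even, hence $\theta$ and $\phi$ are even, so $(\partial_j\phi)(0)=0$. Therefore, when differentiating $b_n=\Gamma_n 2^n\phi(2^n Ax)\sin(\tfrac{17}{12}2^n\vec e\cdot x)$ and evaluating at $x=0$, only the cosine-derivative term survives, giving
$$
\partial_1 b_n(0)=\partial_2 b_n(0)=\tfrac{17\sqrt{2}}{24}\,\Gamma_n 2^{2n}\phi(0).
$$
The correction $\partial_1 c_n(0)$ is handled by Fourier inversion: $\widehat{\partial_1 c_n}(\xi)= i\xi_1\,\tfrac{\xi_2-\xi_1}{\xi_2}\widehat{b_n}(\xi)$ and $|\xi_1-\xi_2|\lesssim \varepsilon 2^n$ on $\mathrm{supp}\,\widehat{b_n}$, while $\|\widehat{b_n}\|_{L^1}\lesssim\Gamma_n 2^n$, yielding $|\partial_1 c_n(0)|\lesssim \varepsilon\,\Gamma_n 2^{2n}$. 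Since $(\omega^n_0)_{12}=\tfrac12(\partial_1 c_n-\partial_1 b_n-\partial_2 b_n)$, choosing $\varepsilon$ small absorbs the correction and yields $|(\omega^n_0)_{12}(0)|\geq \tfrac{17\sqrt{2}}{48}\phi(0)\Gamma_n 2^{2n}$. Multiplying, the $(1,1)$-entry at the origin satisfies $|(-2 d_n(\omega^n_0)_{12})(0)|\gtrsim \phi(0)^2\Gamma_n^2 2^{2n}\ln n$, which supplies $\tilde c$.

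For the equality with $\dot\Delta_n$, I would use Fourier support localization. The map $\widehat{d_n}$ is supported in $\{|\xi|\leq \tfrac{35}{24}2^{n/2}\}$, whereas each entry of $\omega^n_0$ is a linear combination of first-order derivatives of $b_n,c_n$ and so has Fourier support in the annulus $\{\tfrac{33}{24}2^n\leq|\xi|\leq \tfrac{35}{24}2^n\}$. By the convolution theorem, every scalar entry of $d_n\cdot(\omega^n_0)_{ij}$ is spectrally supported in $\{\tfrac{33}{24}2^n-\tfrac{35}{24}2^{n/2}\leq |\xi|\leq \tfrac{35}{24}2^n+\tfrac{35}{24}2^{n/2}\}$, which for $n$ large lies strictly inside the set $\{\tfrac43 2^n\leq |\xi|\leq \tfrac32 2^n\}$ where $\varphi(2^{-n}\cdot)\equiv 1$. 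Thus $\dot\Delta_n$ acts as the identity on every scalar entry of $\tau^n_0\omega^n_0-\omega^n_0\tau^n_0$, giving the claimed equality of $L^\infty$-norms.

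The main obstacle is the sharp lower bound for $(\omega^n_0)_{12}(0)$: one must verify that the two potentially competing corrections — from the $\phi$-derivative in $\partial b_n$ and from $\partial_1 c_n$ — are both genuinely smaller than the clean leading term uniformly in $n$. The $\phi$-derivative correction is killed exactly by the evenness of $\theta$ (so $\theta'(0)=0$), and the $c_n$-correction is killed by the $\varepsilon$-narrow spectral window that makes $\xi_1/\xi_2\approx 1$ on $\mathrm{supp}\,\widehat{b_n}$. Once these two cancellations are made quantitative, everything else reduces to counting constants.
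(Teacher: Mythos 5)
Your proposal is correct and follows essentially the same route as the paper: reduce to the $(1,1)$-entry $-2d_n(\omega^n_0)_{12}=[(\pa_1+\pa_2)b_n-\pa_1 c_n]\,d_n$, evaluate the clean term at $x=0$ to pick up $\Gamma_n^2 2^{2n}\phi^2(0)\sum_{j\le n/2}j^{-1}$, control the $\pa_1 c_n$ correction by $\|\xi_1\widehat{c_n}\|_{L^1}\lesssim \ep\,\Gamma_n 2^{2n}$ with an $\ep$-independent constant, and get the $\dot\Delta_n$ identity from the spectral localization of $d_n\cdot\nabla b_n$, $d_n\cdot\nabla c_n$ near $|\xi|\approx \frac{17\sqrt2}{12}2^n$. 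Your entrywise spectral-support argument for the $\dot\Delta_n$ equality is in fact slightly more careful than the paper's (which asserts the remaining entries vanish, a claim that is only literally true for $d=2$ but harmless), and the rest matches the paper's estimates up to trivial rearrangement.
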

\begin{remark}
We should emphasize that, here and in what follows, the positive constant $C$ whose value may vary from line to line, may depend on $\ep$ and $\phi$ but not $n$. The positive constants $\tilde{c}$ and $\tilde{C}$ whose value may vary from line to line, may depend on $\phi$ but not $n$ and $\ep$.
\end{remark}
\begin{proof}  By direct computations, one has
\bbal
&(\tau^n_0\omega^n_0-\omega^n_0\tau^n_0)_{11}=[\pa_2(u^n_0)_{1}-\pa_1(u^n_0)_{2}]d_n,
\\&(\tau^n_0\omega^n_0-\omega^n_0\tau^n_0)_{22}=[-\pa_2(u^n_0)_{1}+\pa_1(u^n_0)_{2}]d_n,
\\&(\tau^n_0\omega^n_0-\omega^n_0\tau^n_0)_{ij}=0, \quad else.
\end{align*}
Notice that
\bbal
[\pa_2(u^n_0)_{1}-\pa_1(u^n_0)_{2}]d_n=(\pa_1+\pa_2)b_nd_n-\pa_1c_nd_n=:h_n,
\end{align*}
and the support conditions of $\widehat{b_n}$ and $\widehat{d_n}$, then
$$\mathrm{supp} \ \widehat{h_n}(\xi)\subset  \left\{\xi\in\R^d: \ \frac{65}{48}2^{n}\leq |\xi|\leq \frac{71}{48}2^{n}\right\},$$
which implies the first equality.

It is easy to verify that
\bal\label{yz1}
&\|(\pa_1+\pa_2)b_nd_n\|_{L^\infty}\geq \big|[(\pa_1+\pa_2)b_nd_n](0)\big| \geq \tilde{c}\Gamma^2_n2^{2n}\phi^2(0)\sum^{n/2}_{j=1}\frac1j.
\end{align}
Recalling the construction of $(b_n,c_n,d_n)$ and \eqref{hyy1}, we deduce that
\bal\label{yz2}
\|\pa_1c_nd_n\|_{L^\infty}&\leq\|\pa_1c_n\|_{L^\infty}\|d_n\|_{L^\infty}\leq \|\xi_1\widehat{c_n}(\xi)\|_{L^1}\|d_n\|_{L^\infty}\nonumber\\
&\leq \f\|\frac{\xi_1(\xi_1-\xi_2)}{\xi_2}\widehat{b_n}\g\|_{L^1}\|d_n\|_{L^\infty}\nonumber\\
&\leq \tilde{C}\ep 2^n\|\widehat{b_n}\|_{L^1}\|d_n\|_{L^\infty}\leq \tilde{C} \Gamma^2_n \ep2^{2n}\sum^{n/2}_{j=1}\frac1j,
\end{align}
where we have used \eqref{hyy2}. In fact, \eqref{hyy2} implies that $|\xi_1-\xi_2|\leq \tilde{C}\ep2^n$ and $|\xi_1|\approx|\xi_2|\approx2^n$ when $\xi\in \mathrm{supp} \ \widehat{b_n}$. Here we would like to emphasize that the above constants $\tilde{c}$ and $\tilde{C}$ do not depend on the parameter $\ep$.

Combining \eqref{yz1} and \eqref{yz2}, yields
\bbal
\|\tau^n_0\omega^n_0-\omega^n_0\tau^n_0\|_{L^\infty}\geq \f(\tilde{c}-\tilde{C}\ep\g)2^{2n}\Gamma^2_n\sum^{n/2}_{j=1}\frac1j\geq \f(\tilde{c}-\tilde{C}\ep\g)2^{2n}\Gamma^2_n\ln n.
\end{align*}
This completes the proof of Proposition \ref{pro2}.
\end{proof}
\subsection{Local well-posedness for \eqref{0} with initial data \eqref{u0-de}-\eqref{t0-de}}
\quad
From now on, we choose ``certain time" as $T=2^{-2n}(\ln n\Gamma_n^2)^{-1}$.
We begin to establish a locally well-posed result for System \eqref{0} with initial data constructed by \eqref{u0-de}-\eqref{t0-de}.

\begin{proposition}\label{pro3}Let $(u^n_0,\tau^n_0)$ be defined by \eqref{u0-de}-\eqref{t0-de}. Given $ 1\leq q< 2d$, there exist some constant $C_0>1$ which may depend on $\ep$ and $\phi$ but not $n$, and $N_0$ such that for $n>N_0$, System \eqref{0} has a unique local solution $(u, \tau)$ associated with initial data $(u^n_0,\tau^n_0)$ satisfying
$$
\begin{aligned}
& u \in  \mathcal{C}\left([0, T], \dot{B}_{q, 1}^{\frac{d}{q}-1} \cap \dot{B}_{q, 1}^{\frac{d}{q}+3}\right) \cap {L}^1\left([0, T], \dot{B}_{q, 1}^{\frac{d}{q}+1} \cap \dot{B}_{q, 1}^{\frac{d}{q}+5}\right), \\
& \tau \in  \mathcal{C}\left([0, T], \dot{B}_{q, 1}^{\frac{d}{q}} \cap \dot{B}_{q, 1}^{\frac{d}{q}+4}\right),
\end{aligned}
$$
and the following estimates hold for $k\in\{0,1,2,3,4\}$
\begin{align*}
&\|u\|_{L^\infty_T(\B^{\frac d{q}-1+k}_{q,1})}+\|u\|_{L^1_T(\B^{\frac d{q}+1+k}_{q,1})}\leq C_0\Gamma_n2^{kn}, \\
&\|\tau\|_{L^\infty_T(\B^{\frac d{q}+k}_{{q},1})}\leq  C_0\Gamma_n\ln n2^{kn}.
\end{align*}
\end{proposition}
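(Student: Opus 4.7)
The plan is to combine the local existence from Theorem \ref{lw} with a bootstrap argument to propagate the higher regularity afforded by the initial data and to produce the quantitative bounds. The short time $T=2^{-2n}(\ln n\,\Gamma_n^2)^{-1}$ is calibrated precisely so that both the Stokes--stress coupling and the quadratic nonlinearities can be treated as small perturbations of the linear heat/transport dynamics.

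\textbf{Step 1 (existence and propagation).} Since $(u^n_0,\tau^n_0)\in \dot B^{d/p-1}_{p,1}\times \dot B^{d/p}_{p,1}$, Theorem \ref{lw} yields a unique maximal solution in the critical space. By Proposition \ref{pro1}, the data actually lies in $\dot B^{d/p-1+k}_{p,1}\times \dot B^{d/p+k}_{p,1}$ for every $k\in\{0,\dots,4\}$, and iterating Lemmas \ref{rg-t} and \ref{rg-h} in higher smoothness keeps these norms finite on the lifespan; in particular the solution is smooth enough for all the estimates below to be rigorously justified.

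\textbf{Step 2 (velocity estimate).} Fix a large $C_0>1$ and let $T^\ast$ be the supremum of $t\le T$ on which the claimed bounds hold with $2C_0$ in place of $C_0$. On $[0,T^\ast]$ I apply the Stokes estimate (Remark after Lemma \ref{rg-h}) to the $u$-equation with forcing $-\mathcal{P}(u\cdot\nabla u)+\mu\,\mathcal{P}\,\mathrm{div}\,\tau$. The Stokes--stress coupling contributes at most
\[
\mu\int_0^{T^\ast}\|\mathrm{div}\,\tau\|_{\dot B^{d/p-1+k}_{p,1}}\,ds\;\le\;\mu T\cdot 2C_0\Gamma_n\ln n\cdot 2^{kn}\;=\;2\mu C_0\Gamma_n^{-1}2^{(k-2)n},
\]
which is $o(\Gamma_n 2^{kn})$ for $n$ large, since $\Gamma_n^{-2}\ll 2^{2n}$ and $k\le 4$. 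The convective term $u\cdot\nabla u$ is handled by Lemma \ref{ps1} when $k=0$ and by Lemma \ref{ps} (together with $\dot B^{d/p}_{p,1}\hookrightarrow L^\infty$) when $k\ge 1$, yielding a bound of order $C_0^2\Gamma_n^2 2^{kn}$, which is absorbed because $\Gamma_n\to 0$. Adding the initial data estimate from Proposition \ref{pro1} closes the $u$-inequality with the single constant $C_0$.

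\textbf{Step 3 (stress estimate).} For $\tau$, apply Lemma \ref{rg-t} to the transport equation with source $P=\omega\tau-\tau\omega$. Since $V_p(t)=\int_0^t\|\nabla u\|_{\dot B^{d/p}_{p,1}}\,ds\le 2C_0\Gamma_n\to 0$, the factor $e^{CV_p(t)}$ is bounded by a universal constant. Splitting the source via Lemma \ref{ps},
\[
\int_0^t \|P\|_{\dot B^{d/p+k}_{p,1}}\,ds \;\le\; C\|u\|_{L^1_t(\dot B^{d/p+1+k}_{p,1})}\|\tau\|_{L^\infty_t(\dot B^{d/p}_{p,1})}+C\|u\|_{L^1_t(\dot B^{d/p+1}_{p,1})}\|\tau\|_{L^\infty_t(\dot B^{d/p+k}_{p,1})},
\]
and the bootstrap hypothesis bounds the right-hand side by $CC_0^2\Gamma_n^2\ln n\cdot 2^{kn}$. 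Together with $\|\tau^n_0\|_{\dot B^{d/p+k}_{p,1}}\le C\Gamma_n\ln n\cdot 2^{kn/2}\le C\Gamma_n\ln n\cdot 2^{kn}$ from Proposition \ref{pro1}, this closes the $\tau$-inequality. Choosing $C_0$ large enough to absorb the universal constants and $n$ large enough that $\Gamma_n\ll 1$, a standard continuity argument forces $T^\ast=T$.

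The main obstacle is the interplay of scales: the stress norm exceeds the velocity norm by the logarithmic factor $\ln n$, and the Stokes--stress coupling contributes to $u$ in proportion to $T\,\|\tau\|$; the logarithm in the definition of $T$ is inserted precisely to make this coupling, as well as every quadratic nonlinearity, subleading compared with the target $\Gamma_n 2^{kn}$. Matching the short time $T=2^{-2n}(\ln n\,\Gamma_n^2)^{-1}$ to the highest-order norm $k=4$ is the tightest point of the bootstrap, and verifying that no term violates the $C_0$-threshold there is what drives the choice of $T$.
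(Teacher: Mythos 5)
Your overall strategy is the same as the paper's: solve locally via Theorem \ref{lw}, then close quantitative a priori estimates on $[0,T]$ with $T=2^{-2n}(\ln n\,\Gamma_n^2)^{-1}$ using the Stokes estimate (Lemma \ref{rg-h} and the remark after it), the transport estimate (Lemma \ref{rg-t}), and the product laws (Lemmas \ref{ps}--\ref{ps1}), with the smallness of $\Gamma_n$ and of $T\|\tau\|$ absorbing all quadratic and coupling terms; whether one runs a simultaneous bootstrap in all $k\in\{0,\dots,4\}$ (as you do) or proceeds inductively in $k$ (as the paper does) is only an organizational difference, and your bookkeeping of the sizes ($\Gamma_n^{-1}2^{(k-2)n}$ for the coupling, $\Gamma_n^2 2^{kn}$ and $\Gamma_n^2\ln n\,2^{kn}$ for the nonlinearities) matches the paper's.

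There is, however, one step that does not go through as written. In Step 3 you apply Lemma \ref{rg-t} directly at regularity $s=\frac dp+k$ with the exponential factor $e^{CV_p(t)}$, $V_p(t)=\int_0^t\|\nabla u\|_{\dot B^{d/p}_{p,1}}\,ds$. But Lemma \ref{rg-t} requires $s\le 1+\frac d{p_1}$ with $p\le p_1$, so it covers only $k\in\{0,1\}$; for $k\ge2$ the index $\frac dp+k$ is outside its range for every admissible $p_1$. The standard transport estimate valid at such high regularity replaces the exponent by $\int_0^t\|\nabla u\|_{\dot B^{s-1}_{p,1}}\,ds$, which under your own bootstrap bounds is of size $\Gamma_n2^{(k-1)n}\gg1$ for $k\ge2$, so it cannot be exponentiated harmlessly and the estimate would not close in that naive form. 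The missing ingredient is the paper's maneuver (stated there for $k=1$ and invoked for $k=2,3,4$): differentiate the $\tau$-equation $k$ times, so the transported unknown sits at level $\dot B^{d/p}_{p,1}$ where Lemma \ref{rg-t} applies with the small exponent $\int_0^t\|\nabla u\|_{\dot B^{d/p}_{p,1}}\,ds\lesssim\Gamma_n$, and treat the Leibniz terms $\nabla^j u\cdot\nabla\nabla^{k-j}\tau$ as additional sources; these produce precisely the product terms you wrote, of size $C_0^2\Gamma_n^2\ln n\,2^{kn}$, and the argument then closes as you describe. So your claimed inequalities are correct, but the justification for $k\ge2$ needs this extra (routine) step rather than a direct citation of Lemma \ref{rg-t}.
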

\begin{proof}  Since the initial data $(u^n_0,\tau^n_0)$ is in the Schwartz class, we can deduce that $(u,\tau)$ belongs to the smoother class. More precisely, we know from Theorem \ref{lw} that for short time $T$, System \eqref{0} has a unique local solution $(u, \tau)$ satisfying
\bbal
& u \in  \mathcal{C}\left([0, T], \dot{B}_{q, 1}^{\frac{d}{q}-1}\right) \cap {L}^1\left([0, T], \dot{B}_{q, 1}^{\frac{d}{q}+1}\right), \quad
 \tau \in  \mathcal{C}\left([0, T], \dot{B}_{q, 1}^{\frac{d}{q}}\right).
\end{align*}
In fact, using Lemma \ref{rg-h} and Lemma \ref{ps}, one has
\bal\label{u1}
&\|u\|_{\LL^\infty_{T}(\B^{\frac d{q}-1}_{{q},1})}+\|u\|_{L^1_{T}(\B^{\frac d{q}+1}_{q,1})}\leq C\f(\|u^n_0\|_{\B^{\frac d{q}-1}_{q,1}}+\|u\|_{\LL^\infty_{T}(\B^{\frac d{q}-1}_{q,1})}\|u\|_{L^1_{T}(\B^{\frac d{q}+1}_{{q},1})}+T\|\tau\|_{\LL^\infty_{T}(\B^{\frac d{q}}_{{q},1})}\g).
\end{align}
Using Lemma \ref{rg-t} and Lemma \ref{ps1}, one has
\bal\label{t1}
\|\tau\|_{\LL^\infty_{T}(\B^{\frac d{q}}_{{q},1})}\leq \exp \left(C\|u\|_{L_T^1(\dot{B}_{{q}, 1}^{\frac{d}{{q}}+1})}\right)\f(\|\tau^n_0\|_{\B^{\frac d{q}}_{{q},1}}+C\|\tau\|_{\LL^\infty_{T}(\B^{\frac d{q}}_{{q},1})}\|u\|_{L^1_{T}(\B^{\frac d{q}+1}_{{q},1})}\g).
\end{align}
For the sake of convenience, we denote
\bbal
&X_T:=\|u(t,\cdot)\|_{\LL^\infty_{T}(\B^{\frac d{q}-1}_{{q},1})}+\|u(t,\cdot)\|_{L^1_{T}(\B^{\frac d{q}+1}_{{q},1})},\quad
Y_T:=\|\tau(t,\cdot)\|_{\LL^\infty_{T}(\B^{\frac d{q}}_{{q},1})}\quad\text{and}\quad Z_T:=X_T+2CTY_T.
\end{align*}
From \eqref{u1} and \eqref{t1}, it follows that
\bal\label{ul}
Z_T&\leq \f(C+Ce^{CX_T}\g)\f(\|u^n_0\|_{\B^{\frac d{q}-1}_{{q},1}}+T\|\tau^n_0\|_{\B^{\frac d{q}}_{{q},1}}+(Z_T)^2\g)
\leq Ce^{CZ_T}\f(\Gamma_n+(Z_T)^2\g).
\end{align}
By using the continuity argument, we can deduce that for $n$ large enough
\bal\label{u2}
\|u\|_{\LL^\infty_{T}(\B^{\frac d{q}-1}_{{q},1})}+\|u\|_{L^1_{T}(\B^{\frac d{q}+1}_{{q},1})}\leq Z_T\leq 2C\Gamma_n.
\end{align}
Inserting the above into \eqref{t1} yields that
\bal\label{t2}
\|\tau\|_{\LL^\infty_{T}(\B^{\frac d{q}}_{{q},1})}\leq 2\|\tau^n_0\|_{\B^{\frac d{q}}_{{q},1}}\leq C\Gamma_n\ln n.
\end{align}
{\bf Case k=1.} Applying $\nabla$ to Eq. $\eqref{0}_2$, then taking advantage of Lemmas \ref{rg-t}-\ref{ps1} again and using \eqref{u2}-\eqref{t2}, we can infer that
\bbal
\|\tau\|_{\LL_T^{\infty}(\dot{B}_{{q}, 1}^{\frac{d}{q}+1})} &\leq  \exp \left(C\|u\|_{L_T^1(\dot{B}_{{q}, 1}^{\frac{d}{{q}}+1})}\right)\left(\left\|\tau^n_0\right\|_{\dot{B}_{{q}, 1}^{\frac{d}{q}+1}}
+C\|\tau\|_{\LL_T^{\infty}(\dot{B}_{q, 1}^{\frac{d}{q}})}\|u\|_{L_T^1(\dot{B}_{{q}, 1}^{\frac{d}{{q}}+2})}\right.\\
&\quad\left.+C\|\tau\|_{\LL_T^{\infty}(\dot{B}_{q, 1}^{\frac{d}{q}+1})}\|u\|_{L_T^1(\dot{B}_{q, 1}^{\frac{d}{q}+1})}\right)\nonumber\\
&\leq C\Gamma_n\left(2^n\ln n
+\ln n\|u\|_{L_T^1(\dot{B}_{{q}, 1}^{\frac{d}{{q}}+2})}+\|\tau\|_{\LL_T^{\infty}(\dot{B}_{q, 1}^{\frac{d}{q}+1})}\right),
\end{align*}
which implies that for large $n$ enough
\bal\label{t3-0}
\|\tau\|_{\LL_T^{\infty}(\dot{B}_{{q}, 1}^{\frac{d}{q}+1})}
&\leq C\Gamma_n\ln n\left(2^n
+\|u\|_{L_T^1(\dot{B}_{{q}, 1}^{\frac{d}{{q}}+2})}\right).
\end{align}
With the aid of Lemmas \ref{rg-h}-\ref{ps1} again, one can obtain from \eqref{t3-0} that
\bbal
\|u\|_{\LL^\infty_{T}(\B^{\frac d{q}}_{{q},1})}& +\|u\|_{L^1_{T}(\B^{\frac d{q}+2}_{{q},1})}\leq \left\|u^n_0\right\|_{\dot{B}_{q, 1}^{\frac{d}{q}}}+C\|u\|_{\LL_T^{\infty}(\dot{B}_{q, 1}^{\frac{d}{q}})}\|u\|_{L_T^1(\dot{B}_{q, 1}^{\frac{d}{q}+1})}+T\|\tau\|_{\LL_T^{\infty}(\dot{B}_{q, 1}^{\frac{d}{q}+1})}\\
&\leq C\left\{\Gamma_n2^n+\Gamma_n^{-1}2^{-n}+(\Gamma_n+\Gamma_n^{-1}2^{-2n})\f(\|u\|_{\LL^\infty_{T}(\B^{\frac d{q}}_{{q},1})}+\|u\|_{L_T^1(\dot{B}_{{q}, 1}^{\frac{d}{{q}}+2})}\g)\right\},
\end{align*}
which implies that
\bbal
\|u\|_{\LL^\infty_{T}(\B^{\frac d{q}}_{{q},1})}+\|u\|_{L^1_{T}(\B^{\frac d{q}+2}_{{q},1})}
&\leq C\Gamma_n2^n
\end{align*}
and in turn
\bbal
\|\tau\|_{\LL_T^{\infty}(\dot{B}_{{q}, 1}^{\frac{d}{q}+1})}
&\leq C\Gamma_n\ln n2^n.
\end{align*}
{\bf Case k=2,3,4.} Following the similar procedure, we can obtain all the estimations in the case $k=2,3,4.$ We omit the details.
This completes the proof of Proposition \ref{pro3}.
\end{proof}

\subsection{Discontinuity of the solution map}
\quad
Now, we give the lower bound estimation of $\|\tau(T)\|_{\dot{B}^{\frac dp}_{p,r}}$ with $T=2^{-2n}(\ln n\Gamma_n^2)^{-1}$ which is crucial for the proof of the discontinuity of the solution map.

By Lemma \ref{fh2} and using \eqref{new}, one has
\bbal
\|\tau(T)\|_{\dot{B}^{\frac dp}_{p,r}}&\ges\|\tau(T)\|_{\dot{B}^{0}_{\infty,r}}\ges\|\tau(T,\phi(T,x))\|_{\dot{B}^{0}_{\infty,r}}\ges\sup_{j\in \mathbb{Z}}\|\dot{\De}_j\f(\tau(T,\phi(T,x))\g)\|_{L^\infty}
\\& \ges T\sup_{j\in \mathbb{Z}}\|\dot{\De}_jP_0\|_{L^\infty}-\|\tau^n_0\|_{\dot{B}^{\frac dp}_{p,r}}-\int^T_0\|P(s,\phi(s,x))-P_0(\phi(s,x))\|_{L^\infty}\dd s\\
&\quad-\int^T_0\|P_0(\phi(s,x))-P_0(x)\|_{L^\infty}\dd s\\
& \ges T\|\dot{\De}_nP_0\|_{L^\infty}-\|\tau^n_0\|_{\dot{B}^{\frac dp}_{p,r}}-\underbrace{\int_0^T\|P(s,x)-P_0(x)\|_{L^\infty}\dd s}_{=:\mathbf{I}_1}\\
&\quad-\underbrace{\int^T_0\|P_0(\phi(s,x))-P_0(x)\|_{L^\infty}\dd s}_{=:\mathbf{I}_2}.
\end{align*}
Next, we have to present the upper estimations of $\mathbf{I}_1$ and $\mathbf{I}_2$.

{\bf Upper estimation of $\mathbf{I}_1$.}
Notice that for $t\leq T=2^{-2n}(\ln n\Gamma_n^2)^{-1}$, using Lemmas \ref{ps}-\ref{ps0} and Proposition \ref{pro3}, we get
\bbal
\|\tau(t)-\tau^n_0\|_{\B^{d}_{{1},1}}&\leq Ct\|\tau\|_{\LL_t^{\infty}(\B^{d}_{{1},1})}\|u\|_{\LL_t^{\infty}(\B^{d+1}_{{1},1})}+Ct\|\tau\|_{\LL_t^{\infty}(\B^{d+1}_{{1},1})}\|u\|_{\LL_t^{\infty}(\B^{d}_{{1},1})}
\\&\leq Ct\Gamma_n\ln n\Gamma_n2^{2n}+Ct\Gamma_n\ln n2^n\Gamma_n2^n\leq C,
\end{align*}
and
\bbal
\|\omega(t)-\omega^n_0\|_{\B^{d}_{{1},1}}&\leq C\|u(t)-u^n_0\|_{\B^{d+1}_{{1},1}}\\
&\leq C\|u\|_{\LL_t^{\infty}(\B^{d}_{{1},1})}\|u\|_{L_t^{1}(\B^{d+2}_{{1},1})}
+Ct\|u\|_{\LL_t^{\infty}(\B^{d+3}_{1,1})}+Ct\|\tau\|_{\LL_t^{\infty}(\B^{d+2}_{{1},1})}
\\&\leq C\Gamma^2_n2^{2n}+Ct\Gamma_n2^{4n}+Ct\Gamma_n\ln n2^{2n}\\
&\leq C\Gamma^2_n2^{2n}+C(\ln n\Gamma_n)^{-1}2^{2n},
\end{align*}
which in turn give that for $t\leq T=2^{-2n}(\ln n\Gamma_n^2)^{-1}$
\bbal
\|P(t)-P_0\|_{L^\infty}&\leq C\|\omega(t)-\omega^n_0\|_{\B^{d}_{{1},1}}\|\tau(t)\|_{\LL_t^{\infty}(\dot{B}_{1, 1}^{d})}+C\|\tau(t)-\tau^n_0\|_{\B^{d}_{1,1}}\|u^n_0\|_{\LL_t^{\infty}(\dot{B}_{1, 1}^{d+1})}\\
&\leq C\Gamma^3_n\ln n2^{2n}+C2^{2n}.
\end{align*}
Thus, for $t\leq T=2^{-2n}(\ln n\Gamma_n^2)^{-1}$, one has
\bal\label{lyz1}
\mathbf{I}_1
&\leq C\Gamma_n+C(\ln n\Gamma_n^2)^{-1}.
\end{align}
{\bf Upper estimation of $\mathbf{I}_2$.} Using the Newton-Leibniz formula, we have
$$P_0(\phi(s,x))-P_0(x)=\int_0^1(\phi(s,x)-x)\cdot(\nabla P_0)(\theta \phi(s,x)+(1-\theta)x)\dd\theta,$$
from which, we deduce that for $t\leq T=2^{-2n}(\ln n\Gamma_n^2)^{-1}$
\bal\label{lyz2}
\mathbf{I}_2\leq&~\int_0^T\|\nabla P_0\|_{L^\infty}\|\phi(s,x)-x\|_{L^\infty}\dd s\nonumber\\
\leq&~CT^2\f(\|\na \tau^n_0\|_{L^\infty}\|\na u^n_0\|_{L^\infty}+\|\tau^n_0\|_{L^\infty}\|\na^2 u^n_0\|_{L^\infty}\g)\|u\|_{L^\infty_T(L^\infty)}\nonumber\\
\leq&~CT^2\f(\|\tau^n_0\|_{\B^{\frac dp+1}_{p,1}}\|u^n_0\|_{\B^{\frac dp+1}_{p,1}}
+\|\tau^n_0\|_{\B^{\frac dp}_{p,1}}\|u^n_0\|_{\B^{\frac dp+2}_{p,1}}\g)\|u\|_{\LL_T^{\infty}(\dot{B}_{1, 1}^{d})}\nonumber\\
\leq&~C(\ln n\Gamma_n)^{-1},
\end{align}
where we have used Lemma \ref{fh1}.

Recalling that $T=2^{-2n}(\ln n\Gamma_n^2)^{-1}$ and using Propositions \ref{pro1}-\ref{pro2}, one has
\bal\label{lyz3}
T\|\dot{\Delta}_nP_0\|_{L^\infty}\geq \tilde{c}T2^{2n}\ln n\Gamma_n^2=\tilde{c}\quad \text{and}\quad\|\tau^n_0\|_{\B^{\frac{d}{p}}_{p,r}}\leq C\Gamma_n.
\end{align}
Inserting the above \eqref{lyz1}-\eqref{lyz3}, then we get that for large $n$
\bbal
\|\tau(T)\|_{\dot{B}^{\frac dp}_{p,r}}\geq \tilde{c}-C\Gamma_n-C(\ln n\Gamma_n^2)^{-1}-C(\ln n\Gamma_n)^{-1}\geq \tilde{c}/2.
\end{align*}
Meanwhile, we deduce from Proposition \ref{pro3} that
$$\|u\|_{L^\infty_T(\B^{\frac d{p}-1}_{p,1})}+\|u\|_{L^1_T(\B^{\frac d{p}+1}_{p,1})}\leq C_0\Gamma_n\to0\quad\text{as}\quad n\to\infty$$
and Proposition \ref{pro1} that
$$\|u^n_0\|_{\B^{\frac{d}{p}-1}_{p,r}}+\|\tau^n_0\|_{\B^{\frac{d}{p}}_{p,r}}\leq C\Gamma_n\to0\quad\text{as}\quad n\to\infty.$$
Thus, we have obtained a sequence of initial data such that it verifies the discontinuity of solution map. The proof of Theorem \ref{th3} is finished.{\hfill $\square$}

\section*{Acknowledgments}
The authors would like to thank the anonymous referees for valuable comments and suggestions which greatly improved the presentation of this paper. J. Li is supported by the National Natural Science Foundation of China (No.12161004), Innovative High end Talent Project in Ganpo Talent Program (No.gpyc20240069), Training Program for Academic and Technical Leaders of Major Disciplines in Ganpo Juncai Support Program (No.20232BCJ23009). Y. Yu is supported by the National Natural Science Foundation of China (No.12101011). W. Zhu is supported by the National Natural Science Foundation of China (12201118), Guangdong Basic and Applied Basic Research Foundation (No.2021A1515111018) and Guangdong Natural Science Foundation (No.2023A1515010706).
\vspace*{-0.5em}
\section*{Declarations}
\noindent\textbf{Data Availability} No data was used for the research described in the article.

\noindent\textbf{Conflict of interest}
The authors declare that they have no conflict of interest.
\vspace*{-0.5em}


\begin{thebibliography}{99}
\linespread{0}\addtolength{\itemsep}{-1.0ex}
\bibitem{AP} F. De Anna, M. Paicu, The Fujita-Kato theorem for some Oldroyd-B model, J. Funct. Anal. 279(11) (2020), 108761.

\bibitem{B} H. Bahouri, J.-Y. Chemin, R. Danchin, Fourier Analysis and Nonlinear Partial Differential Equations, Grundlehren der Mathematischen Wissenschaften, Springer, Heidelberg, 2011.
    \bibitem{Bou} J. Bourgain, N. Pavlovi\'{c}, Ill-posedness of the Navier-Stokes equations in a critical space in 3D, J. Funct. Anal. 255 (2008), 2233-2247.
\bibitem{cm} J.-Y. Chemin, N. Masmoudi, About lifespan of regular solutions of equations related to viscoelastic fluids, SIAM J. Math. Anal. 33(1) (2001), 84-112.
\bibitem{ch} Q. Chen, X. Hao, Global well-posedness in the critical Besov spaces for the incompressible Oldroyd-B model
without damping mechanism, J. Math. Fluid Mech. 21(42) (2019), pp23.
\bibitem{chm} Q. Chen, C. Miao, Global well-posedness of viscoelastic fluids of Oldroyd type in Besov spaces, Nonlinear Anal. 68 (2008), 1928-1939.
\bibitem{C3} Q. Chen, C. Miao, Z. Zhang, On the ill-posedness of the compressible Navier--Stokes equations in the critical Besov spaces, Rev. Mat. Iberoam. 31 (2015), 1375-1402.

\bibitem{CNY} Q. Chen, Y. Nie, W. Ye, Sharp ill-posedness for the non-resistive MHD equations in Sobolev spaces, J. Funct. Anal. 286(6), (2024), 110302.

\bibitem{CW} J. Chen, R. Wan, Ill-posedness for the compressible Navier--Stokes equations with the velocity in $L^6$ framework, J. Inst. Math. Jussieu. 18 (2019), 829-854.
\bibitem{p12} P. Constantin, M. Kliegl, Note on global regularity for two dimensional Oldroyd-B fluids stress, Arch. Ration. Mech. Anal. 206 (2012), 725-740.
\bibitem{pwz} P. Constantin, J. Wu, J. Zhao, Y. Zhu, High Reynolds number and high Weissenberg number Oldroyd-B model with dissipation, J. Evol. Equ. 21 (2021), 2787-2806.
\bibitem{e1} T.M. Elgindi, F. Rousset, Global regularity for some Oldroyd-B type models, Commun. Pure Appl. Math. 68(11) (2015), 2005-2021.
\bibitem{e2} T.M. Elgindi, J. Liu, Global wellposedness to the generalized Oldroyd type models in $\R^3$, J. Differ. Equ. 259(5) (2015), 1958-1966.
\bibitem{z16} D. Fang, R. Zi, Global solutions to the Oldroyd-B model with a class of large initial data, SIAM J. Math. Anal. 48 (2016), 1054-1084.
\bibitem{fgo} E. Fern\'{a}ndez-Cara, F. Guill\'{e}n, R. Ortega, Mathematical modeling and analysis of viscoelastic fluids of the Oldroyd kind. In Handbook of numerical
analysis, Vol. 8. Elsevier: North Holland, Amsterdam, (2002), 543–661.

\bibitem{gs} C. Guillop\'{e}, J.-C. Saut, Existence results for the flow of viscoelastic fluids with a differential constitutive law, Nonlinear Anal. 15(9) (1990), 849-869.
\bibitem{IO22} T. Iwabuchi, T. Ogawa, Ill-posedness for the compressible Navier--Stokes equations under barotropic condition in limiting Besov spaces, J. Math. Soc. Japan, 74(2) (2022), 353-394.


\bibitem{llz}  Z. Lei, C. Liu, Y. Zhou, Global solutions for incompressible viscoelastic fluids, Arch. Ration. Mech. Anal. 188(3) (2008), 371-398.
\bibitem{hlz}  J. Huang, Q. Liu, R. Zi, Global existence and decay rates of solutions to the
Oldroyd-B model with stress tensor diffusion, J. Differ. Equ. 389(5) (2024), 38-89.
\bibitem{lm} P.L. Lions, N. Masmoudi, Global solutions for some Oldroyd models of non-Newtonian flows, Chin. Ann. Math., Ser. B 21 (2000), 131-146.


\bibitem{lilz} F.H. Lin, C. Liu, P. Zhang, On hydrodynamics of viscoelastic fluids, Commun. Pure Appl. Math. 58(11) (2005), 1437-1471.

\bibitem{ny1} Y. Nie, J. Yuan, Well-posedness and ill-posedness of a multidimensional chemotaxis system in the critical Besov spaces, Nonlinear Anal. 196 (2020), 111782.


\bibitem{o}J. Oldroyd, Non-Newtonian effects in steady motion of some idealized elasticoviscous liquids, Proc. R. Soc. Edinb. Sect. A 245 (1958), 278-297.
\bibitem{wang} B. Wang, Ill-posedness for the Navier-Stokes equations in critical Besov spaces $\dot{B}^{-1}_{\infty,q}$, Adv. Math. 268 (2015), 350-372.
\bibitem{wwx} P. Wang, J. Wu, X. Xu, Y. Zhong, Sharp decay estimates for Oldroyd-B model with only fractional stress tensor diffusion, J. Funct. Anal. 282(4) (2022), 109332.
\bibitem{wz} J. Wu, J. Zhao, Global regularity for the generalized incompressible Oldroyd-B model with only stress tensor dissipation in critical Besov spaces, J. Differ. Equ. 316 (2022), 641-686.

\bibitem{yon} T. Yoneda, Ill-posedness of the 3D-Navier-Stokes equations in a generalized Besov space near $\text{BMO}^{-1}$, J. Funct. Anal. 258 (2010), 3376-3387.


\bibitem{zy} Y. Zhu, Global small solutions of 3D incompressible Oldroyd-B model without damping mechanism, J. Funct. Anal. 274(7) (2018), 2039-2060.
\bibitem{zfz} R. Zi, D. Fang, T. Zhang, Global solution to the incompressible Oldroyd-B model in the critical $L^p$ framework: the case of the non-small coupling parameter, Arch. Ration. Mech. Anal. 213(2) (2014), 651-687.

\end{thebibliography}
\end{document}